\newtheorem{thm}{Theorem}[section]
\newtheorem{crl}[thm]{Corollary}
\newtheorem{lmm}[thm]{Lemma}
\newtheorem{prp}[thm]{Proposition}
\theoremstyle{definition}
\newtheorem{exa}[thm]{Example}
\theoremstyle{remark}
\title{Linear stability of the incoherent solution and the transition formula for the Kuramoto-Daido model}
\author{Hayato \textsc{Chiba}\footnote{ Faculty of Mathematics, Kyushu University, Fukuoka,
819-0395, Japan.\newline e-mail: \texttt{chiba@math.kyushu-u.ac.jp}}
          }
\begin{document}

\maketitle
\vspace*{-1.0cm}
\begin{center}
October, 19, 2009
\end{center}

\begin{abstract}      
The Kuramoto-Daido model, which describes synchronization phenomena,
is a system of ordinary differential equations on $N$-torus defined as coupled harmonic oscillators,
whose natural frequencies are drawn from some distribution function.
In this paper, the continuous model for the Kuramoto-Daido model is introduced and 
the linear stability of its trivial solution (incoherent solution) is investigated.
Kuramoto's transition point $K_c$, at which the incoherent solution changes the stability, 
is derived for an arbitrary distribution function for natural frequencies.
It is proved that if the coupling strength $K$ is smaller than $K_c$,
the incoherent solution is asymptotically stable, while if $K$ is larger than $K_c$,
it is unstable.
\end{abstract}


\section{Introduction}

Collective synchronization phenomena are observed in a variety of areas such as chemical reactions,
engineering circuits and biological populations~\cite{Pik}.
In order to investigate such a phenomenon, Kuramoto~\cite{Kura1} proposed a system of ordinary differential equations
\begin{equation}
\frac{d\theta _i}{dt} 
= \omega _i + \frac{K}{N} \sum^N_{j=1} \sin (\theta _j - \theta _i),\,\, i= 1, \cdots  ,N,
\label{KMN}
\end{equation}
where $\theta _i \in [ 0, 2\pi )$ denotes the phase of an $i$-th oscillator on a circle,
$\omega _i\in \mathbf{R}$ denotes its natural frequency, $K>0$ is the coupling strength,
and where $N$ is the number of oscillators.
Eq.(\ref{KMN}) is derived by means of the averaging method from coupled dynamical systems having 
limit cycles, and now it is called the \textit{Kuramoto model}.

It is obvious that when $K=0$, $\theta _i(t)$ and $\theta _j(t)$ rotate on a circle at 
different velocities unless $\omega _i$ is equal to $\omega _j$, 
and it is true for sufficiently small $K>0$.
On the other hand, if $K$ is sufficiently large, it is numerically observed that
some of oscillators or all of them tend to rotate at the same velocity on average, which is called the 
\textit{synchronization}~\cite{Pik,Str1,Mir1}.
If $N$ is small, such a transition from de-synchronization to synchronization may be well revealed
by means of the bifurcation theory~\cite{ChiPa,Mai1,Mai2}.
However, if $N$ is large, it is difficult to investigate the transition from the view point of
the bifurcation theory and it is still far from understood.

In order to evaluate whether synchronization occurs or not, Kuramoto introduced
the \textit{order parameter} $r(t)e^{\sqrt{-1}\psi (t)}$ by
\begin{equation}
r(t)e^{\sqrt{-1}\psi (t)} := \frac{1}{N}\sum^N_{j=1} e^{\sqrt{-1} \theta _j(t)},
\label{order2}
\end{equation}
which gives the centroid of oscillators, where $r, \psi \in \mathbf{R}$.
It seems that if synchronous state is formed, $r(t)$ takes a positive number, while
if de-synchronization is stable, $r(t)$ is zero on time average (see Fig.\ref{fig1}).
Based on this observation and some formal calculation, Kuramoto conjectured a bifurcation diagram
of $r(t)$ as follows:
\\[0.2cm]
\textbf{Kuramoto's conjecture}

Suppose that $N\to \infty$ and natural frequencies $\omega _i$'s are distributed according to a probability
density function $g(\omega )$.
If $g(\omega )$ is an even and unimodal function, then the bifurcation
diagram of $r(t)$ is given as Fig.\ref{fig2} (a); that is, if the coupling strength $K$ is 
smaller than $K_c := 2/(\pi g(0))$, then $r(t) \equiv 0$ is asymptotically stable.
On the other hand, if $K$ is larger than $K_c$, there exists a positive constant $r_c$ such that 
$r(t) = r_c$ is asymptotically stable.
Near the transition point $K_c$, the scaling law of $r_c$ is of $O((K- K_c)^{1/2})$.

\begin{figure}
\begin{center}
\includegraphics[]{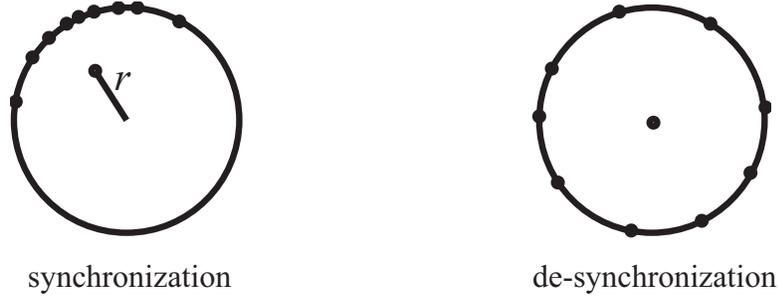}
\caption[]{The order parameter for the Kuramoto model.}
\label{fig1}
\end{center}
\end{figure}

\begin{figure}
\begin{center}
\includegraphics[]{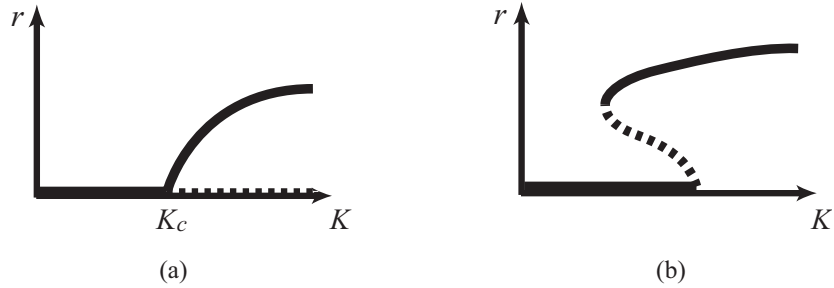}
\caption[]{Typical bifurcation diagrams of the order parameter for the cases that (a) $g(\omega )$
is even and unimodal (b) $g(\omega )$ is even and bimodal.
Solid lines denote stable solutions and dotted lines denote unstable solutions.}
\label{fig2}
\end{center}
\end{figure}

Now the value $K_c = 2/(\pi g(0))$ is called the \textit{Kuramoto's transition point}.
See \cite{Kura2} and \cite{Str1} for the Kuramoto's discussion.

Significant papers of Strogatz et al.~\cite{Str2, Str3, Mir2} partially confirmed the Kuramoto's conjecture.
Though their arguments are not rigorous from a mathematical view point, almost all of them are
justified as will be done in this paper.
In \cite{Str3}, they introduced the continuous model for the Kuramoto model and investigated the linear stability
of a trivial solution called the \textit{incoherent solution}, which corresponds to the de-synchronous state $r\equiv 0$.
They derived the Kuramoto's transition point $K_c = 2/(\pi g(0))$ and showed that if $K > K_c$,
the incoherent solution is unstable in the linear level (i.e. nonlinear terms are neglected).
When $K < K_c$, the linear operator $T$, which defines the linearized equation
of the continuous model around the incoherent solution, has no eigenvalues.
However, in \cite{Str2}, they found that
an analytic continuation of the resolvent $(\lambda -T)^{-1}$ may have poles (\textit{resonance poles})
on the left half plane, and they remarked a possibility that resonance poles induce exponential decay
of the order parameter.
In \cite{Mir2}, the stability of the partially locked state, which corresponds to a solution with positive constant $r=r_c$,
is investigated in the linear level.

Despite the active interest in the case that the distribution function $g(\omega )$ is even and unimodal,
bifurcation diagrams of $r$ for $g(\omega )$ other than the even and unimodal cases are not revealed well.
Martens et al.~\cite{Mar} investigated the bifurcation diagram for a bimodal $g(\omega )$
which consists of two Lorentzian distributions.
In particular, they found that stable partially locked states can coexist with stable 
incoherent solutions if $K$ is slightly smaller than $K_c$ (see Fig.\ref{fig2} (b)).
Such a diagram seems to be common for any bimodal distributions.

A simple extension of the Kuramoto model defined to be
\begin{equation}
\frac{d\theta _i}{dt} 
= \omega _i + \frac{K}{N} \sum^N_{j=1} f(\theta _j - \theta _i),\,\, i= 1, \cdots  ,N,
\label{KDMN}
\end{equation}
is called the \textit{Kuramoto-Daido model} \cite{Daido1,Daido2,Daido3,Daido4}, where the $2\pi$-periodic function 
$f : \mathbf{R} \to \mathbf{R}$ is called the \textit{coupling function}.
Daido~\cite{Daido4} investigated bifurcation diagrams of the order parameter for the Kuramoto-Daido model
with even and unimodal $g(\omega )$ by a similar argument to Kuramoto's one.
He found that if $f(\theta ) \neq \sin \theta $, partially locked states may coexist with stable 
incoherent solutions even if $g(\omega )$ is even and unimodal.

All such studies by physicist are based on formal calculations and numerical simulations.
The purpose of this paper is to justify and extend their results as mathematics for
the Kuramoto-Daido model with any distribution function $g(\omega )$. 
The continuous model for the Kuramoto-Daido model is introduced and 
the linear stability of the incoherent solution is studied.
In particular, the spectrum and the semigroup of a linear operator $T$, which is 
obtained by linearizing the continuous model around the incoherent solution, will be investigated in detail.
At first, a formula for obtaining the transition point $K_c$ for an arbitrary distribution $g(\omega )$ is derived.
As a corollary, the Kuramoto's transition point $K_c = 2/(\pi g(0))$ is obtained if $g(\omega )$ 
is an even and unimodal function.
If $K > K_c$, it is proved that the incoherent solution is unstable
because the operator $T$ has eigenvalues on the right half plane.
It means that if the coupling strength $K$ is large, the de-synchronous state is unstable 
and thus synchronization may occur.
On the other hand, if $0< K < K_c$, 
it will be shown that the spectrum of the operator $T$ consists of the continuous spectrum
and it lies on the imaginary axis. Thus the stability of the incoherent solution is nontrivial.
Despite this fact, under appropriate assumptions for $g(\omega )$,
the order parameter proves to decay exponentially because of existence of resonance poles on the left half plane
as was expected by Strogatz et al.~\cite{Str2}.
It suggests that in general, linear stability of a trivial solution of a linear equation on an
infinite dimensional space is determined by not only the spectrum of the linear operator but 
also its resonance poles.


\section{Continuous model}

In this section, we introduce a continuous model of the Kuramoto-Daido model
and show a few properties of it.

Let us consider the Kuramoto-Daido model (\ref{KDMN}). 
We suppose that the coupling function $f$ is a $C^1$ periodic function with the period $2\pi$.
It is expanded in a Fourier series as
\begin{equation}
f(\theta ) = \sum^{\infty}_{l=-\infty} f_l\, e^{\sqrt{-1}l \theta },\,\, f_l \in \mathbf{C}.
\label{2-1}
\end{equation} 
We can suppose that $f_0 = 0$ without loss of generality because $f_0$ is renormalized into the constants $\omega _i$.
For the Kuramoto model ($f(\theta ) = \sin \theta $), $f_{\pm 1} = \pm 1/(2\sqrt{-1})$ and $f_l = 0 \,\, (l \neq \pm 1)$.
Following Daido~\cite{Daido4}, we introduce the \textit{generalized order parameters} $\hat{Z}^{0}_k$ by
\begin{equation}
\hat{Z}^{0}_k (t) = \frac{1}{N} \sum^N_{j=1} e^{\sqrt{-1} k\theta _j(t)}, \,\, k=0, \pm 1, \pm 2, \cdots .
\label{order}
\end{equation}
In particular, $\hat{Z}^0_1$ is the order parameter defined in Section 1.
By using them, Eq.(\ref{KDMN}) is rewritten as
\begin{equation}
\frac{d \theta _i}{dt} = \omega _i + K \sum^\infty_{l=-\infty} f_l \hat{Z}^{0}_l (t) e^{-\sqrt{-1} l \theta _i}.
\end{equation}
Motivated by these equations, we introduce a continuous model of the Kuramoto-Daido model,
which is an evolution equation of a probability density function $\rho_t = \rho_t (\theta , \omega )$ 
on $S^1 \times \mathbf{R}$ parameterized by $t \in \mathbf{R}$, as
\begin{eqnarray}
\left\{ \begin{array}{ll}
\displaystyle \frac{\partial \rho_t}{\partial t} + 
\frac{\partial }{\partial \theta }
\left( \Bigl(\omega  + K \sum^\infty_{l=-\infty} f_l Z^0_l (t) e^{-\sqrt{-1} l \theta }\Bigr) \rho_t \right) = 0,  \\[0.3cm]
\displaystyle Z^0_k(t) = \int_{\mathbf{R}} \! \int^{2\pi}_{0} \!
 e^{\sqrt{-1} k \theta } \rho_t (\theta , \omega ) d\theta d\omega ,  \\[0.3cm]
\rho_0 (\theta , \omega ) = h(\theta , \omega ),
\end{array} \right.
\label{conti}
\end{eqnarray}
where $h(\theta , \omega )$ is an initial density function.
The $Z^0_k(t)$ is a continuous version of $\hat{Z}^{0}_k (t)$, and we also call it the \textit{generalized
order parameter}.
We can prove that Eq.(\ref{conti}) is proper in the sense that $\hat{Z}^{0}_k (t) \to Z^0_k(t)$ as $N \to \infty$
under some assumptions, although the proof is not given in this paper.
If we regard 
\begin{eqnarray*}
v := \omega  + K \sum^\infty_{l=-\infty} f_l Z^0_l (t) e^{-\sqrt{-1}l \theta }
\end{eqnarray*}
as a velocity field, Eq.(\ref{conti}) provides an equation of continuity 
$\partial \rho_t / \partial t + \partial (\rho_t v)/ \partial \theta  = 0$ known in fluid dynamics.
It is easy to prove the low of conservation of mass:
\begin{equation}
\int_{\mathbf{R}} \! \int^{2\pi}_{0} \! \rho_t (\theta , \omega ) d\theta
 =  \int_{\mathbf{R}} \! \int^{2\pi}_{0} \! h(\theta , \omega ) d\theta  =: g(\omega ).
\label{g}
\end{equation}
A function $g$ defined as above gives a probability density function for natural frequencies
$\omega \in \mathbf{R}$ such that $\int_{\mathbf{R}} \! g(\omega )d\omega  = 1$.

By using the characteristic curve method, Eq.(\ref{conti}) is formally integrated as follows:
Consider the equation
\begin{eqnarray}
\frac{dx}{dt} &=& \omega + K \sum^\infty_{l=-\infty} f_l Z^0_l (t) e^{-\sqrt{-1} l x },\,\, x\in S^1,
\label{cha0} 
\end{eqnarray}
which defines a characteristic curve.
Let $x = x(t, s; \theta,\omega  )$ be a solution of Eq.(\ref{cha0}) satisfying $x(s,s; \theta,\omega  ) = \theta $.
Then, $\rho_t$ is given as
\begin{equation}
\rho_t (\theta , \omega ) = h(x(0,t; \theta,\omega ), \omega ) \exp \Bigl[ 
K \int^t_{0} \! \sum^\infty_{l=-\infty}\sqrt{-1}\,l\, f_l\, Z^0_l (s) e^{-\sqrt{-1} l x(s,t; \theta,\omega ) }ds \Bigr]. 
\label{cha2}
\end{equation}
By using Eq.(\ref{cha2}), it is easy to show the equality
\begin{equation}
\int_{\mathbf{R}} \! \int^{2\pi}_{0} \! a(\theta , \omega ) \rho_t (\theta , \omega ) d\theta d\omega
 =  \int_{\mathbf{R}} \! \int^{2\pi}_{0} \! a(x(t,0; \theta,\omega ) , \omega ) h(\theta , \omega ) d\theta d\omega,
\label{cha3}
\end{equation}
for any continuous function $a(\theta , \omega )$.
In particular, the generalized order parameters $Z^0_k (t)$ are rewritten as
\begin{equation}
Z^0_k (t)
 =  \int_{\mathbf{R}} \! \int^{2\pi}_{0} \! e^{\sqrt{-1} k x(t,0; \theta, \omega)} h (\theta , \omega ) d\theta d\omega.
\label{cha4}
\end{equation}
Substituting it into Eqs.(\ref{cha0}), (\ref{cha2}), we obtain
\begin{equation}
\frac{d}{dt}x(t,s; \theta , \omega )
 = \omega  + K \int_{\mathbf{R}} \! \int^{2\pi}_{0} \! 
       f(x(t,0;\theta ', \omega ') - x(t,s; \theta , \omega )) h (\theta' , \omega' ) d\theta' d\omega',
\label{sol1}
\end{equation}
and
\begin{eqnarray}
\rho_t (\theta , \omega ) &=&  h(x(0,t; \theta,\omega ), \omega ) \times \nonumber \\ 
& & \exp \Bigl[ 
K \int^t_{0} \! ds \cdot \int_{\mathbf{R}} \! \int^{2\pi}_{0} \!\frac{\partial f}{\partial \theta }
\Bigl( x(s,0;\theta ', \omega ') - x(s,t; \theta , \omega ) \Bigr) h(\theta' , \omega' ) d\theta' d\omega' \Bigr],
\label{sol2}
\end{eqnarray}
respectively.
Even if $h(\theta , \omega )$ is not differentiable, we consider Eq.(\ref{sol2})
to be a weak solution of Eq.(\ref{conti}).
It is easy in usual way to prove that the integro-ODE (\ref{sol1}) has a unique solution 
for any $t>0$, and this proves that
the continuous model Eq.(\ref{conti}) has a unique weak solution (\ref{sol2}) for 
an arbitrary initial data $h(\theta , \omega )$.

Throughout this paper, we suppose that the initial date $h(\theta , \omega )$ is of the form
$h(\theta , \omega ) = \hat{h}(\theta ) g(\omega )$.
This assumption corresponds to the assumption for the Kuramoto-Daido model (\ref{KDMN}) that
initial values $\{\theta _j(0)\}_{j=1}^{N}$ and natural frequencies $\{\omega _j\}_{j=1}^{N}$
are independently distributed.
This is a physically natural assumption used in many literatures.
In this case, $\rho_t(\theta , \omega )$ is written as $\rho_t(\theta , \omega )
 = \hat{\rho}_t(\theta , \omega ) g(\omega )$, where
\begin{eqnarray}
\hat{\rho}_t(\theta , \omega ) &=& \hat{h}(x(0,t; \theta,\omega )) \times \nonumber \\
& &  \exp \Bigl[ 
K \int^t_{0} \! ds \cdot \int_{\mathbf{R}} \! \int^{2\pi}_{0} \!\frac{\partial f}{\partial \theta }
\Bigl( x(s,0;\theta ', \omega ') - x(s,t; \theta , \omega ) \Bigr) \hat{h}(\theta')g(\omega' ) d\theta' d\omega' \Bigr],
\end{eqnarray}
and $\hat{\rho}_t(\theta , \omega )$ satisfies the same equation as Eq.(\ref{conti}).


\section{Linear stability of the incoherent solution}

A trivial solution of the continuous model (\ref{conti}),
which is independent of $\theta $ and $t$, is given by $\rho_t (\theta , \omega ) = g(\omega )/(2\pi)$,
or equivalently $\hat{\rho}_t (\theta , \omega ) = 1/(2\pi)$.
It is called the \textit{incoherent solution}, which corresponds to the de-synchronized state.
Note that in this case $r=0$.
In this section, we investigate the stability of the incoherent solution and the order parameter.

Let
\begin{equation}
Z_j(t, \omega ) := \int^{2\pi}_{0} \! e^{\sqrt{-1}j\theta }  \hat{\rho}_t (\theta , \omega ) d\theta 
\end{equation}
be the Fourier coefficients of $\hat{\rho}_t (\theta , \omega )$.
Then, $Z_0(t, \omega ) = 1$ and $Z_j ,\, j = \pm 1, \pm 2, \cdots $ satisfy the 
differential equations
\begin{eqnarray*}
\frac{dZ_j}{dt} &=& \sqrt{-1}j\omega Z_j + \sqrt{-1} j K \sum^{\infty}_{-\infty}f_l Z^0_l(t) Z_{j-l} \\
&=& \sqrt{-1}j\omega Z_j +\sqrt{-1} j K f_j Z^0_j(t) + \sqrt{-1} j K \sum_{l\neq j} f_l Z^0_l(t) Z_{j-l}.
\end{eqnarray*}
The incoherent solution corresponds to the zero solution $Z_j \equiv 0$ for $j = \pm 1, \pm 2, \cdots $.
Since $|Z_j(t, \omega )| \leq 1$, $Z_j(t, \omega )$ is in the Hilbert space $L^2 (\mathbf{R}, g(\omega )d\omega )$
for every $t\,$ :
\begin{eqnarray*}
|| Z_j ||^2_{L^2 (\mathbf{R}, g(\omega )d\omega )} = \int_{\mathbf{R}} \! |Z_j(t, \omega )|^2 g(\omega )d\omega \leq 1.
\end{eqnarray*} 
Thus we linearize the above equation as an evolution equation on $L^2 (\mathbf{R}, g(\omega )d\omega )$
\begin{equation}
\frac{dZ_j}{dt} = \left( j \sqrt{-1} \mathcal{M} + j \sqrt{-1} K f_j \mathcal{P}\right) Z_j,\,\, 
j = \pm 1, \pm 2, \cdots ,
\label{4-1}
\end{equation}
where $\mathcal{M}: q(\omega ) \mapsto \omega q(\omega )$ is the multiplication operator 
on $L^2 (\mathbf{R}, g(\omega )d\omega )$ and $\mathcal{P}$ is the projection on $L^2 (\mathbf{R}, g(\omega )d\omega )$
defined to be
\begin{equation}
\mathcal{P}q(\omega ) = \int_{\mathbf{R}} \! q(\omega )g(\omega ) d\omega . 
\end{equation} 
If we put $P_0(\omega ) \equiv 1$, $\mathcal{P}$ is also written as
$\mathcal{P}q(\omega ) = (q, P_0)$, where $(\,\, , \,\,)$ 
is the inner product on $L^2 (\mathbf{R}, g(\omega )d\omega )$:
\begin{equation}
(q_1, q_2) := \int_{\mathbf{R}} \! q_1(\omega ) \overline{q_2(\omega )} g(\omega )d\omega .
\end{equation}
Note that the order parameter is given as $Z^0_1 = (Z_1, P_0)$.
To determine the stability of the incoherent solution and the order parameter, 
we have to investigate the spectrum and the semigroup of the operator
$T_j := j \sqrt{-1} \mathcal{M} + j \sqrt{-1} K f_j \mathcal{P}$.

\subsection{Analysis of the operator $\sqrt{-1}\mathcal{M}$}

If $f_j = 0$, $T_j = j\sqrt{-1}\mathcal{M}$.
It is known that the multiplication operator $\mathcal{M}$ on $L^2 (\mathbf{R}, g(\omega )d\omega )$
is self-adjoint and its spectrum is given by 
$\sigma (\mathcal{M}) = \mathrm{supp} (g) \subset \mathbf{R}$, where $\mathrm{supp} (g)$ is a support of the 
density function $g$. Thus the spectrum of $j\sqrt{-1} \mathcal{M}$ is 
\begin{equation}
\sigma (j\sqrt{-1} \mathcal{M})= j\sqrt{-1} \cdot \mathrm{supp} (g)
 = \{ j\sqrt{-1} \lambda \, | \, \lambda \in \mathrm{supp} (g) \} \subset \sqrt{-1}\mathbf{R}.
\label{4-3}
\end{equation}
The semi-group $e^{j\sqrt{-1}\mathcal{M}t}$ generated by $j\sqrt{-1}\mathcal{M}$\,
is given as $e^{j\sqrt{-1}\mathcal{M}t} q(\omega ) = e^{j\sqrt{-1}\omega t}q(\omega ) $.
In particular, we obtain
\begin{equation}
(e^{j\sqrt{-1}\mathcal{M}t} q_1, q_2)
 = \int_{\mathbf{R}} \! e^{j\sqrt{-1}\omega t}q_1(\omega )\overline{q_2(\omega )}g(\omega )d\omega  
\label{4-4}
\end{equation}
for any $q_1, q_2 \in L^2 (\mathbf{R}, g(\omega )d\omega)$.
This is the Fourier transform of the function $q_1(\omega ) \overline{q_2 (\omega )} g(\omega )$.
Thus if $q_1(\omega ) \overline{q_2 (\omega )} g(\omega )$ is real analytic on $\mathbf{R}$
and has an analytic continuation to a neighborhood of the real axis,
then $(e^{j\sqrt{-1}\mathcal{M}t} q_1, q_2) $ decays exponentially as $t\to \infty$,
while if $q_1(\omega ) q_2 (\omega ) g(\omega )$ is $C^r$, then it decays as $O(1/t^r)$
(see Vilenkin~\cite{Vil}).

These facts are summarized as follows:
\\[0.2cm]
\begin{prp}
 Suppose that $f_j=0$ and Eq.(\ref{4-1}) is reduced to
$dZ_j/dt = j \sqrt{-1} \mathcal{M} Z_j$.
A solution of this equation with an initial value $q(\omega ) \in L^2 (\mathbf{R}, g(\omega )d\omega)$
is given by $Z_j(t) = e^{j \sqrt{-1} \mathcal{M} t}q(\omega ) = e^{j \sqrt{-1} \omega  t}q(\omega )$.
In particular the linearized order parameter $Z^0_1 (t) = (e^{\sqrt{-1}\mathcal{M}t}q ,  P_0) $
decays exponentially as $t\to \infty$ if $g(\omega )$ and $q(\omega )$
have analytic continuations to a neighborhood of the real axis.
\end{prp}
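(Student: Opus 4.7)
The first assertion is essentially by inspection. Since $\mathcal{M}$ is the multiplication operator $q(\omega)\mapsto \omega q(\omega)$ on $L^2(\mathbf{R}, g(\omega)d\omega)$, it is self-adjoint and generates the unitary group whose action on an element $q$ is pointwise multiplication by $e^{j\sqrt{-1}\omega t}$. I would just verify directly that $Z_j(t,\omega) := e^{j\sqrt{-1}\omega t} q(\omega)$ lies in $L^2(\mathbf{R}, g(\omega)d\omega)$ for every $t$ (its modulus equals $|q(\omega)|$, so its norm is preserved), that $Z_j(0,\omega)=q(\omega)$, and that the $L^2$-derivative in $t$ equals $j\sqrt{-1}\omega Z_j = j\sqrt{-1}\mathcal{M} Z_j$; this last point follows from dominated convergence applied to the difference quotient, using the bound $|(e^{j\sqrt{-1}\omega h}-1)/h|\leq |\omega|$ pointwise together with $\omega q(\omega)\in L^2(\mathbf{R}, g\,d\omega)$ (reducing, if necessary, to $q$ supported where $|\omega|$ is bounded by a cutoff and passing to the limit by uniqueness of the strongly continuous solution).

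For the exponential decay, the starting point is the explicit integral representation, which is just Eq.(\ref{4-4}) specialised to $q_2=P_0\equiv 1$:
\begin{equation}
Z_1^0(t) = (e^{\sqrt{-1}\mathcal{M}t}q, P_0) = \int_{\mathbf{R}} e^{\sqrt{-1}\omega t}\, q(\omega)\, g(\omega)\, d\omega.
\end{equation}
This is the Fourier transform of $F(\omega):=q(\omega)g(\omega)$. The plan is a standard contour-shift argument: if $F$ extends holomorphically to a horizontal strip $\{\omega \in \mathbf{C} : |\mathrm{Im}\,\omega|<\delta\}$ for some $\delta>0$, then for any $0<\delta'<\delta$ I would apply Cauchy's theorem to the rectangle with vertices $\pm R$ and $\pm R + \sqrt{-1}\delta'$, and let $R\to\infty$. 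This shifts the contour of integration from $\mathbf{R}$ to $\mathbf{R}+\sqrt{-1}\delta'$, giving
\begin{equation}
Z_1^0(t) = e^{-\delta' t}\int_{\mathbf{R}} e^{\sqrt{-1}x t}\, F(x+\sqrt{-1}\delta')\, dx,
\end{equation}
so that $|Z_1^0(t)|\leq C e^{-\delta' t}$, where $C$ is the $L^1$-norm of $F(\cdot+\sqrt{-1}\delta')$ on the real line.

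The technical obstacle is justifying this contour shift, which amounts to showing that the two vertical segments contribute nothing in the limit $R\to\infty$ and that $F(\cdot+\sqrt{-1}\delta')\in L^1(\mathbf{R})$. Neither is automatic from membership in $L^2(\mathbf{R}, g\,d\omega)$ alone; it is here that the analyticity hypothesis has to be used in a quantitative way, assuming in addition that $q$ and $g$ decay sufficiently fast (for example, uniformly on each horizontal line in the strip, with an integrable rate). Under the natural assumption that $F(x+\sqrt{-1}y)\to 0$ as $|x|\to\infty$ uniformly for $|y|\leq\delta'$ and $F(\cdot+\sqrt{-1}\delta')\in L^1$, the argument goes through and yields the claimed exponential decay at rate $\delta'$; one may then optimise by taking $\delta'$ arbitrarily close to the distance from the real axis to the nearest singularity of $F$. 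The reference to Vilenkin cited just above the proposition supplies the classical version of this Paley--Wiener style estimate.
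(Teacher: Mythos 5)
Your proposal is correct and follows essentially the same route as the paper: the paper likewise identifies $Z^0_1(t)=(e^{\sqrt{-1}\mathcal{M}t}q,P_0)$ as the Fourier transform of $q(\omega)g(\omega)$ and invokes the classical fact (cited to Vilenkin) that analytic continuability to a strip yields exponential decay, which is exactly the contour-shift argument you spell out. Your remark that mere analyticity is not enough and some uniform decay of $q^*g^*$ on horizontal lines is needed is a fair and worthwhile sharpening of a hypothesis the paper leaves implicit.
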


The resolvent $(\lambda - j\sqrt{-1}\mathcal{M})^{-1}$ of the operator $j\sqrt{-1}\mathcal{M}$
is calculated as
\begin{equation}
(  (\lambda - j\sqrt{-1}\mathcal{M})^{-1}q_1, q_2 ) 
 = \int_{\mathbf{R}} \! \frac{1}{\lambda - j\sqrt{-1} \omega } q_1(\omega ) \overline{q_2 (\omega )}g(\omega )d\omega . 
\label{4-7}
\end{equation}
We define the function $D(\lambda )$ to be
\begin{equation}
D(\lambda ) = (  (\lambda - j\sqrt{-1}\mathcal{M})^{-1}P_0, P_0 ) 
 = \int_{\mathbf{R}} \! \frac{1}{\lambda - j\sqrt{-1} \omega }g(\omega )d\omega 
\label{4-8}
\end{equation}
(recall that $P_0(\omega ) \equiv 1$).
It is holomorphic in $\mathbf{C}\backslash \sigma (j\sqrt{-1}\mathcal{M})$
and will play an important role in the later calculation.

\subsection{Analysis of the operator $T_j = j\sqrt{-1}\mathcal{M} + j\sqrt{-1}Kf_j \mathcal{P}$}

In what follows, we suppose that $f_j \neq 0$.
The domain $\mathsf{D}(T_j)$ of $T_j$ is given by $\mathsf{D}(\mathcal{M}) \cap \mathsf{D}(\mathcal{P}) = \mathsf{D}(\mathcal{M})$.
Since $\mathcal{M}$ is self-adjoint and since $\mathcal{P}$ is bounded, $T_j$ is a closed operator \cite{Kato}.
Let $\mathfrak{\varrho} (T_j)$ be the resolvent set of $T_j$ and 
$\sigma (T_j) = \mathbf{C}\backslash \mathfrak{\varrho} (T_j)$ the spectrum.
Since $T_j$ is closed, there is no residual spectrum.
Let $\sigma _p(T_j)$ and $\sigma _c(T_j)$ be the point spectrum (the set of eigenvalues)
and the continuous spectrum of $T_j$, respectively.
\\[0.2cm]
\begin{prp} (i) Eigenvalues $\lambda $ of $T_j$ are given as roots of
\begin{equation}
D(\lambda ) = \frac{1}{j\sqrt{-1}Kf_j},\,\,\, \lambda \in \mathbf{C}\backslash \sigma (j\sqrt{-1}\mathcal{M}).
\label{4-10}
\end{equation} 
(ii) The continuous spectrum of $T_j$ is given by
\begin{equation}
\sigma _c(T_j) = \sigma (j\sqrt{-1} \mathcal{M}) = j\sqrt{-1}\cdot \mathrm{supp} (g).
\end{equation}
\end{prp}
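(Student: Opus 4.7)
The plan is to treat $T_j = j\sqrt{-1}\mathcal{M} + K_j\mathcal{P}$ with $K_j := j\sqrt{-1}Kf_j$ as a rank-one perturbation of the self-adjoint operator $j\sqrt{-1}\mathcal{M}$, and exploit the fact that $\mathcal{P}q = (q,P_0)P_0$ reduces every spectral computation to a single scalar condition involving $D(\lambda)$.

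For part (i), I would first show that the eigenvalue equation $T_j q = \lambda q$ with $q\neq 0$ and $\lambda \notin \sigma(j\sqrt{-1}\mathcal{M})$ rearranges to
\begin{equation*}
q \;=\; K_j\,(q,P_0)\,(\lambda - j\sqrt{-1}\mathcal{M})^{-1}P_0.
\end{equation*}
Taking the $L^2(\mathbf{R},g(\omega)d\omega)$ inner product with $P_0$ yields the scalar identity $(q,P_0) = K_j(q,P_0)D(\lambda)$. Since the displayed formula forces $(q,P_0)\neq 0$ whenever $q\neq 0$, this collapses to $K_j D(\lambda) = 1$, which is exactly (\ref{4-10}). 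Conversely, when $\lambda \notin \sigma(j\sqrt{-1}\mathcal{M})$ solves (\ref{4-10}), the function $q := (\lambda - j\sqrt{-1}\mathcal{M})^{-1}P_0$ lies in $L^2(\mathbf{R},g(\omega)d\omega)$ because $1/(\lambda - j\sqrt{-1}\omega)$ is bounded on $\mathrm{supp}(g)$, and a direct computation using $j\sqrt{-1}\mathcal{M}(\lambda - j\sqrt{-1}\mathcal{M})^{-1} = \lambda(\lambda - j\sqrt{-1}\mathcal{M})^{-1} - I$ confirms $T_j q = \lambda q$.

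For part (ii), the inclusion $\sigma(T_j) \setminus \sigma(j\sqrt{-1}\mathcal{M}) \subseteq \sigma_p(T_j)$ follows from a rank-one inversion formula: factoring
\begin{equation*}
\lambda - T_j \;=\; (\lambda - j\sqrt{-1}\mathcal{M})\bigl[\,I - K_j(\lambda - j\sqrt{-1}\mathcal{M})^{-1}\mathcal{P}\,\bigr]
\end{equation*}
on $\mathbf{C}\setminus \sigma(j\sqrt{-1}\mathcal{M})$ and applying a Sherman--Morrison-type identity to the rank-one perturbation of the identity in the brackets, one obtains an explicit bounded inverse whose only possible singularity is at roots of $1 - K_j D(\lambda) = 0$; such $\lambda$ have already been identified in (i) as eigenvalues. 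Hence, in combination with (i), every point of $\sigma(T_j)$ outside $\sigma(j\sqrt{-1}\mathcal{M})$ is a true eigenvalue, and so cannot contribute to $\sigma_c(T_j)$. For the reverse inclusion $\sigma(j\sqrt{-1}\mathcal{M})\subseteq \sigma_c(T_j)$, I would invoke Weyl's theorem on essential spectra: $K_j\mathcal{P}$ is rank one and thus compact, so $T_j$ shares the essential spectrum of $j\sqrt{-1}\mathcal{M}$, which by (\ref{4-3}) equals $j\sqrt{-1}\cdot \mathrm{supp}(g)$ in its entirety since $j\sqrt{-1}\mathcal{M}$ is self-adjoint with no isolated eigenvalues (given that $g$ is a non-atomic density). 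This already gives $\sigma(j\sqrt{-1}\mathcal{M})\subseteq \sigma(T_j)$. Finally, absence of residual spectrum (noted in the excerpt), together with the observation that $T_j - \lambda$ is injective for $\lambda \in j\sqrt{-1}\cdot\mathrm{supp}(g)$ --- derived by examining the pointwise relation $(j\sqrt{-1}\omega - \lambda)q(\omega) = K_j(q,P_0)$ and noting that any candidate $q$ with $(q,P_0)\neq 0$ fails to lie in $L^2(\mathbf{R},g(\omega)d\omega)$ under mild regularity of $g$ near $\omega = \lambda/(j\sqrt{-1})$, while $(q,P_0)=0$ forces $q=0$ from the same equation --- places this spectrum inside $\sigma_c(T_j)$.

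The main obstacle is this last step: ruling out embedded eigenvalues on $\sigma(j\sqrt{-1}\mathcal{M})$. The rank-one algebra of part (i) and the Sherman--Morrison/Weyl arguments of part (ii) are essentially mechanical, but the non-existence of $L^2$ solutions at points of $j\sqrt{-1}\cdot\mathrm{supp}(g)$ depends on how $g$ behaves near the singularity $\omega = \lambda/(j\sqrt{-1})$ and may need a concrete regularity assumption (e.g.\ absolute continuity with a bounded density) or a careful measure-theoretic argument to state cleanly in full generality.
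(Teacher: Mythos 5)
Your proposal follows essentially the same route as the paper: part (i) is the identical rank-one rearrangement followed by taking the inner product with $P_0$ to reduce everything to $j\sqrt{-1}Kf_j\,D(\lambda)=1$, and part (ii) rests on the same two facts the paper cites, namely stability of the essential spectrum under the perturbation $j\sqrt{-1}Kf_j\mathcal{P}$ together with the absence of eigenvalues on $j\sqrt{-1}\cdot\mathrm{supp}(g)$. If anything you are slightly more careful than the paper, which appeals to stability under a ``bounded'' perturbation where compactness (rank one) of $\mathcal{P}$ is what is actually needed, and which dismisses embedded eigenvalues by asserting $1/(\lambda - j\sqrt{-1}\omega)\notin L^2(\mathbf{R}, g(\omega)d\omega)$ for $\lambda\in\sqrt{-1}\mathbf{R}$ without the regularity condition on $g$ near the singularity that you correctly identify as the delicate point.
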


\begin{proof} \, (i) Suppose that $\lambda \in \sigma _p(T_j) \backslash  \sigma (j\sqrt{-1} \mathcal{M})$.
Then, there exists $x\in L^2 (\mathbf{R}, g(\omega )d\omega) $ such that
\begin{eqnarray*}
\lambda x = (j\sqrt{-1}\mathcal{M} + j\sqrt{-1}Kf_j \mathcal{P}) x,\,\,\, x\neq 0.
\end{eqnarray*}
Since $\lambda \notin \sigma (j\sqrt{-1} \mathcal{M})$, $(\lambda - j\sqrt{-1} \mathcal{M})^{-1}$
is defined and the above is rewritten as
\begin{eqnarray*}
x &=& (\lambda - j\sqrt{-1} \mathcal{M})^{-1} j\sqrt{-1}Kf_j \mathcal{P}x  \\
&=& j\sqrt{-1}Kf_j ( x, P_0 )  (\lambda - j\sqrt{-1} \mathcal{M})^{-1} P_0(\omega ).
\end{eqnarray*}
By taking the inner product with $P_0(\omega )$, we obtain
\begin{equation}
1 = j\sqrt{-1}Kf_j (  (\lambda - j\sqrt{-1} \mathcal{M})^{-1}P_0,   P_0 )  = j\sqrt{-1}Kf_j  D(\lambda ).
\end{equation}
This proves that roots of Eq.(\ref{4-10}) is in $\sigma _p(T_j)$.
The corresponding eigenvector is given by $x = (\lambda - j\sqrt{-1} \mathcal{M})^{-1} P_0(\omega )
 = 1/(\lambda - j \sqrt{-1}\omega )$.
If $\lambda \in \sqrt{-1}\mathbf{R}$, $x\notin L^2 (\mathbf{R}, g(\omega )d\omega)$.
Thus there are no eigenvalues on the imaginary axis.
\\
(ii) This follows from the fact that the essential spectrum is stable under the 
bounded perturbation and that there are no eigenvalues on $\sigma (j\sqrt{-1} \mathcal{M})$, see \cite{Kato}.
\end{proof}

\subsection{Eigenvalues of the operator $T_j$ and the transition point formula}

Our next task is to calculate roots of Eq.(\ref{4-10}) to obtain eigenvalues of 
$T_j = j\sqrt{-1} \mathcal{M} + j\sqrt{-1}Kf_j \mathcal{P}$.
By putting $\lambda = x + \sqrt{-1}y,\,\, x,y\in \mathbf{R}$,
Eq.(\ref{4-10}) is rewritten as
\begin{equation}
\left\{ \begin{array}{l}
\displaystyle 
\int_{\mathbf{R}} \! \frac{x}{x^2 + (j\omega -y)^2}g(\omega )d\omega 
    = - \frac{\mathrm{Im} (f_j)}{jK|f_j|^2},\\[0.4cm]
\displaystyle 
\int_{\mathbf{R}} \! \frac{j\omega -y}{x^2 + (j\omega -y)^2}g(\omega )d\omega 
    = - \frac{\mathrm{Re} (f_j)}{jK|f_j|^2}. \\
\end{array} \right.
\label{4-14}
\end{equation}
In what follows, we suppose that $\mathrm{Im} (f_j) < 0$.
The case $\mathrm{Im} (f_j) \geq 0$ will be treated in Sec.3.5.
The next lemma is easily obtained.
\\[0.2cm]
\begin{lmm}
\\
(i) When $\mathrm{Im} (f_j) <0$, $\lambda $ satisfies $\mathrm{Re} (\lambda ) > 0$ for any $K > 0$.
\\
(ii) If $K>0$ is sufficiently large, there exists at least one eigenvalue $\lambda $ near infinity.
\\
(iii) If $K>0$ is sufficiently small, there are no eigenvalues.
\end{lmm}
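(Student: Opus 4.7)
The plan is to read off each claim from the real system \eqref{4-14} for $\lambda = x + \sqrt{-1}y$, using sign considerations for (i), an asymptotic expansion of $D(\lambda)$ at infinity for (ii), and a Poisson-type bound on $D(\lambda)$ in the right half-plane for (iii). Throughout I will take $j>0$; the case $j<0$ is handled by the symmetry that exchanges $T_j$ and $T_{-j}$, whose spectra are complex conjugate to one another.

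For (i), the integrand $xg(\omega)/(x^2+(j\omega-y)^2)$ in the first equation of \eqref{4-14} has the same sign as $x$ at every point. Hence the sign of the left-hand side coincides with that of $x$. Under the hypothesis $\mathrm{Im}(f_j)<0$, the right-hand side $-\mathrm{Im}(f_j)/(jK|f_j|^2)$ is strictly positive for every $K>0$, forcing $x=\mathrm{Re}(\lambda) > 0$.

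For (ii), I would expand
\begin{equation*}
D(\lambda) = \frac{1}{\lambda}\int_{\mathbf{R}} \frac{g(\omega)}{1 - j\sqrt{-1}\omega/\lambda}\, d\omega = \frac{1}{\lambda} + O\!\left(\frac{1}{\lambda^2}\right)
\end{equation*}
as $|\lambda|\to\infty$, using $\int g\, d\omega=1$. Setting $c_K := 1/(j\sqrt{-1}Kf_j)$, which tends to $0$ as $K\to\infty$, equation \eqref{4-10} becomes $1/\lambda+O(1/\lambda^2)=c_K$, whose approximate root is $\lambda\approx 1/c_K = j\sqrt{-1}Kf_j$. To promote this to a genuine root, substitute $\lambda = j\sqrt{-1}Kf_j(1+\varepsilon)$ and apply the implicit function theorem (or Rouch\'e's theorem on a small disc centered at $j\sqrt{-1}Kf_j$) in the variable $\varepsilon$ with $1/K$ as the small parameter. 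The main obstacle is a uniform bound on the remainder $O(1/\lambda^2)$ as $\lambda$ escapes to infinity in the direction of $j\sqrt{-1}f_j$, which is the only delicate step in the proof.

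For (iii), combine part (i) with a uniform upper bound on the left-hand side of \eqref{4-14}. The substitution $u=(j\omega-y)/x$ gives, for any $x>0$,
\begin{equation*}
\int_{\mathbf{R}} \frac{xg(\omega)}{x^2+(j\omega-y)^2}\, d\omega = \frac{1}{j}\int_{\mathbf{R}} \frac{g((xu+y)/j)}{1+u^2}\, du \leq \frac{\pi\|g\|_\infty}{j},
\end{equation*}
so the first equation of \eqref{4-14} forces $|\mathrm{Im}(f_j)|/(jK|f_j|^2) \leq \pi\|g\|_\infty/j$, i.e.\ $K \geq |\mathrm{Im}(f_j)|/(\pi\|g\|_\infty |f_j|^2)$. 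Hence for $K$ strictly below this threshold no eigenvalue can exist, which is (iii).
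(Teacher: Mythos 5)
Your proof is correct in parts (i) and (ii) and there it coincides with the paper's: (i) is the same sign observation on the first equation of (\ref{4-14}) (your explicit restriction to $j>0$ and the conjugation symmetry $T_{-j}=\overline{T_j}$ make precise a convention the paper leaves implicit), and (ii) is the same expansion $D(\lambda)=1/\lambda+O(1/\lambda^2)$ followed by Rouch\'e's theorem near $\lambda\sim j\sqrt{-1}Kf_j$; the uniform remainder estimate you flag as the delicate step is likewise left implicit in the paper. Part (iii) is where you genuinely diverge. The paper bounds the left-hand side of the first equation of (\ref{4-14}) by splitting off a $\delta$-neighbourhood of $\omega=y/j$ and integrating by parts against the primitive $G$ of $g$, an argument designed to work without assuming the density is bounded (though its treatment of the inner term via the mean value theorem is itself delicate, since the kernel there changes sign). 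You instead substitute $u=(j\omega-y)/x$ and invoke the Poisson-kernel normalization to obtain the clean uniform bound $\pi\|g\|_\infty/j$. This is shorter and sharper --- your threshold $K\geq -\mathrm{Im}(f_j)/(\pi\|g\|_\infty\,|f_j|^2)$ is consistent with, and in general a lower bound for, the transition point (\ref{4-19}) --- but it silently requires $g\in L^\infty(\mathbf{R})$, a hypothesis the paper never states (it allows general, even discontinuous, probability densities, for which the left-hand side of (\ref{4-14}) need not be uniformly bounded). You should either add the boundedness of $g$ as an explicit assumption or replace the $\|g\|_\infty$ bound by an argument, such as the paper's, that uses only the distribution function of $g$.
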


\begin{proof}  Part (i) of the lemma immediately follows from the first equation of Eq.(\ref{4-14}).
To prove part (ii) of the lemma, note that if $|\lambda |$ is large, Eq.(\ref{4-10}) is rewritten as
\begin{eqnarray*}
\frac{1}{\lambda } + O(\frac{1}{\lambda ^2}) = \frac{1}{j\sqrt{-1} K f_j}.
\end{eqnarray*}
Thus the Rouch\'{e}'s theorem proves that Eq.(\ref{4-10}) has a root $\lambda \sim j\sqrt{-1} K f_j$ if $K>0$
is sufficiently large.
To prove part (iii) of the lemma, we see that the left hand side of the first equation of 
Eq.(\ref{4-14}) is bounded for any $x, y \in \mathbf{R}$.
To do so, let $G(\omega )$ be the primitive function of $g(\omega )$
and fix $\delta >0$ small. The left hand side of the first equation of Eq.(\ref{4-14}) is calculated as
\begin{eqnarray*}
& & \int_{\mathbf{R}} \! \frac{xg(\omega )d\omega}{x^2 + (j\omega -y)^2} \\
 &=& \int^\infty_{y/j + \delta } \! \frac{xg(\omega )d\omega}{x^2 + (j\omega -y)^2}
 + \int^{y/j-\delta }_{-\infty} \! \frac{xg(\omega )d\omega}{x^2 + (j\omega -y)^2}
 + \int^{y/j + \delta }_{y/j - \delta } \! \frac{xg(\omega )d\omega}{x^2 + (j\omega -y)^2} \\
&=& \int^\infty_{y/j + \delta } \! \frac{xg(\omega )d\omega}{x^2 + (j\omega -y)^2}
 + \int^{y/j-\delta }_{-\infty} \! \frac{xg(\omega )d\omega}{x^2 + (j\omega -y)^2} \\
& & \quad + \frac{x}{x^2 + j^2\delta ^2} \left( G(y/j + \delta ) - G(y/j - \delta )\right)
 + \int^{y/j + \delta }_{y/j - \delta } \! \frac{2jx (j\omega -y)}{(x^2 + (j\omega -y)^2)^2} G(\omega )d\omega .
\end{eqnarray*}
The first three terms in the right hand side above are bounded for any $x, y\in \mathbf{R}$.
Since $G$ is continuous, there exists a number $\xi$ such that
the last term is rewritten as
\begin{eqnarray*}
\int^{y/j + \delta }_{y/j - \delta } \! \frac{2jx (j\omega -y)}{(x^2 + (j\omega -y)^2)^2} G(\omega )d\omega
 = 2j\delta \cdot \frac{2x \xi}{(x^2 + \xi^2)^2} G(y/j + \xi /j).
\end{eqnarray*}
This is bounded for any $x, y\in \mathbf{R}$.
Now we have proved that the left hand side of the first equation of Eq.(\ref{4-14})
 is bounded for any $x > 0$, although the right hand side tends to infinity as $K\to +0$.
Thus Eq.(\ref{4-10}) has no roots if $K$ is small. 
\end{proof}

Lemma 3.3 shows that if $K>0$ is sufficiently large, the trivial solution $Z_j = 0$ of the system $dZ_j/dt = T_jZ_j$
is unstable because of the eigenvalues with positive real parts.
Our purpose in this subsection is to determine the bifurcation point $K_c^{(j)}$, which is the minimum value of $K$
such that if $K < K_c^{(j)}$, the operator $T_j$ has no eigenvalues on the right half plane.
To calculate eigenvalues $\lambda = \lambda (K)$ explicitly is difficult in general.
However, note that
since zeros of a holomorphic function do not vanish because of the argument principle,
$\lambda (K)$ disappears if and only if it is absorbed into the continuous spectrum $\sigma (j\sqrt{-1}\mathcal{M})$,
on which $D(\lambda )$ is not holomorphic.
This fact suggests that to determine $K_c^{(j)}$, it is sufficient to investigate Eq.(\ref{4-10}) or Eq.(\ref{4-14})
near the imaginary axis.
Since we are interested in $\lambda (K)$ absorbed into $\sigma (j\sqrt{-1}\mathcal{M}) \subset \sqrt{-1} \mathbf{R}$,
take the limit $x \to +0$ in Eq.(\ref{4-14}):
\begin{equation}
\left\{ \begin{array}{l}
\displaystyle 
\lim_{x\to +0}\int_{\mathbf{R}} \! 
\frac{x}{x^2 + (j\omega -y)^2}g(\omega )d\omega  = - \frac{\mathrm{Im} (f_j)}{jK|f_j|^2},\\[0.4cm]
\displaystyle 
\lim_{x\to +0}\int_{\mathbf{R}} \! 
\frac{j\omega -y}{x^2 + (j\omega -y)^2}g(\omega )d\omega = - \frac{\mathrm{Re} (f_j)}{jK|f_j|^2}. \\
\end{array} \right.
\label{4-15}
\end{equation}
These equations determine $K_n$ and $y_n$ such that one of the eigenvalues $\lambda _n(K)$
converges to $\sqrt{-1} y_n$
as $K\to K _n + 0$ (see Fig.\ref{fig3}). To calculate them, we need the next lemma.

\begin{figure}
\begin{center}
\includegraphics[]{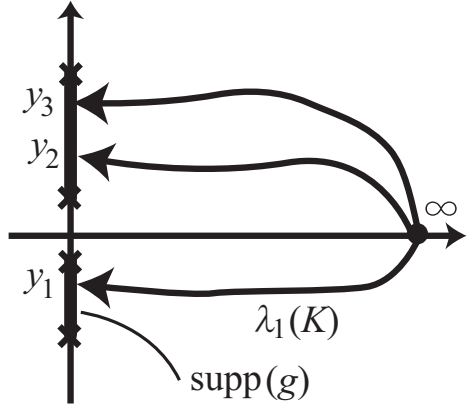}
\caption[]{A schematic view of behavior of roots $\lambda $ of Eq.(\ref{4-10}) when $K$ decreases.}
\label{fig3}
\end{center}
\end{figure}

\begin{lmm} (i) Suppose that $\lambda _n(K) \to \sqrt{-1}y_n$ as $K\to K_n$.
Then, $g(\omega )$ is continuous at $\omega = y_n$.
\\
(ii) If $g(\omega )$ is continuous at $\omega = y$, then
\begin{equation}
\lim_{x\to +0}\int_{\mathbf{R}} \! 
\frac{x}{x^2 + (j\omega -y)^2}g(\omega )d\omega = \pi g(y/j )/j.
\label{4-16}
\end{equation}
\end{lmm}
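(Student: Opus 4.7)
Part (ii) is a Poisson-kernel computation. I would substitute $u=j\omega-y$ in the integrand, which gives
\begin{eqnarray*}
\int_{\mathbf{R}} \frac{x\, g(\omega)}{x^2 + (j\omega-y)^2}\, d\omega
 = \frac{1}{j}\int_{\mathbf{R}} \frac{x}{x^2 + u^2}\, g\!\left(\frac{u+y}{j}\right) du.
\end{eqnarray*}
Since $\frac{1}{\pi}\cdot \frac{x}{x^2+u^2}$ is the Poisson kernel on $\mathbf{R}$, an approximate identity of total mass one, the integral on the right tends to $\pi\, g(y/j)$ as $x\to +0$ whenever $u\mapsto g((u+y)/j)$ is continuous at $u=0$, i.e.\ whenever $g$ is continuous at $y/j$. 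Dividing by $j$ gives the claimed value $\pi g(y/j)/j$.

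For (i), I argue by contradiction using the imaginary part of $D(\lambda _n(K))$. Along the path $\lambda _n(K)=x(K)+\sqrt{-1}y(K)$ with $(x,y)\to (0+,y_n)$, both equations in (\ref{4-14}) force bounded limits since their right-hand sides are continuous in $K$ and finite at $K=K_n$. In particular
\begin{eqnarray*}
\int_{\mathbf{R}} \frac{(j\omega-y)g(\omega)}{x^2+(j\omega-y)^2}\, d\omega
\end{eqnarray*}
must remain bounded. Supposing $g$ had a nonzero one-sided jump of size $c$ at $y_n/j$, I would decompose $g=\tilde g+c\,\mathbf{1}_{[y_n/j,\,y_n/j+\delta ]}$ with $\tilde g$ continuous at $y_n/j$. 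Changing variables $v=j\omega-y$, the step-function contribution to the integral evaluates exactly to
\begin{eqnarray*}
\frac{c}{2j}\Bigl[\log\bigl(x^2+(y_n+j\delta -y)^2\bigr)-\log\bigl(x^2+(y_n-y)^2\bigr)\Bigr],
\end{eqnarray*}
whose second logarithm tends to $-\infty$ as $(x,y)\to (0,y_n)$ regardless of the direction of approach. The continuous remainder $\tilde g$ contributes only a bounded quantity converging to the principal value $\mathrm{P.V.}\int \tilde g(\omega)/(j\omega-y_n)\,d\omega$, so the whole integral would diverge -- a contradiction. Hence $g$ has matching one-sided limits at $y_n/j$.

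To upgrade this to continuity in the pointwise sense, I would invoke (ii) on the real-part equation: the first equation of (\ref{4-14}) has finite left-hand limit $-\mathrm{Im}(f_j)/(jK_n|f_j|^2)$, and part (ii) identifies this common one-sided limit with the pointwise value $g(y_n/j)$, so $g$ is continuous at $y_n/j$. The main obstacle is verifying that the continuous remainder $\tilde g$ really contributes a bounded quantity, which is a classical boundary-value fact for Cauchy integrals of functions continuous at a point; apart from this the proof is elementary integral calculus.
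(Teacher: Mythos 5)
Your part (ii) is correct and is the standard approximate-identity computation for the Poisson kernel; the paper disposes of this by citing Ahlfors, so there is nothing substantive to compare there. Your part (i) follows the same overall strategy as the paper's Step 1 --- extract a logarithmic divergence of the Cauchy-type integral from the jump and contradict the boundedness forced by the eigenvalue equation as $K\to K_n>0$ --- but the step you yourself flag as ``the main obstacle'' is a genuine gap, and as you state it, it is false. The conjugate Poisson integral
\begin{equation*}
\int_{\mathbf{R}}\frac{(j\omega -y)\,\tilde g(\omega )}{x^2+(j\omega -y)^2}\,d\omega
\end{equation*}
of a function that is merely continuous at $y_n/j$ need \emph{not} remain bounded as $(x,y)\to (0+,y_n)$, and the principal value need not exist: boundedness of Cauchy integrals at a point is a Dini/H\"older phenomenon, not a continuity phenomenon. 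For instance $\tilde g(t)=\mathrm{sgn}(t)/\log (1/|t|)$ near $t=0$ is continuous at $0$ with value $0$, yet its truncated Hilbert transform at $0$ diverges like $\log \log (1/x)$. Hence, from your decomposition alone, you cannot conclude that the whole integral diverges just because the indicator part does; in principle the remainder could also diverge.

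The repair is cheap and is exactly the bookkeeping the paper does: you do not need the remainder to be bounded, only to be $o(\log )$ relative to the jump term. By continuity, $|\tilde g(\omega )-h_-|<\varepsilon $ on a $\delta $-neighborhood of $y_n/j$, so the remainder's singular contribution is at most $\varepsilon $ times the very same divergent integral $\int_0^{\delta }|\lambda -j\sqrt{-1}\omega |^{-1}d\omega $ produced by the jump, plus explicitly bounded terms (the constant $h_-$ piece gives a bounded logarithm of a ratio); choosing $2\varepsilon <|h_+-h_-|$ makes the jump term win. This is precisely the paper's estimate of $|D(\lambda )|$; that the paper bounds the modulus of $D$ while you use only the second equation of (\ref{4-14}) is immaterial. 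Two smaller remarks: like the paper's Step 1, your argument only excludes jump discontinuities --- the paper at least sketches a reduction of general measurable $g$ to this case via simple functions in its Step 2, which you omit; and your final ``upgrade to pointwise continuity'' by invoking (ii) is circular, since (ii) assumes continuity --- no integral condition can detect the value of a density at a single point, so continuity here can only mean existence and equality of the one-sided limits, which is what the divergence argument already delivers.
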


\begin{proof} 
To prove (i), suppose that $g(\omega )$ is discontinuous at $\omega =0$
without loss of generality.

STEP 1:  At first, we suppose that $g(\omega )$ is piecewise continuous.
Put $g(+0) = h_+,\, g(-0) = h_-$ and $h_+ \neq h_-$.
In this case, for any $\varepsilon >0$, there exists $\delta >0$ such that if $-\delta < \omega <0$,
then $|g(\omega ) - h_-| < \varepsilon $ and if $0 < \omega < \delta$,
then $|g(\omega ) - h_+| < \varepsilon $. 
For Eq.(\ref{4-10}), we suppose 
$|\lambda | = |x + \sqrt{-1}y| < \delta $ and $y >0$. The case $y < 0$ is treated in a similar manner.
We calculate $D(\lambda )$ as
\begin{eqnarray}
D(\lambda ) &=& \int^\infty_{\delta} \! \frac{g(\omega )}{\lambda - j\sqrt{-1}\omega } d\omega 
+ \int^{-\delta}_{-\infty} \! \frac{g(\omega )}{\lambda - j\sqrt{-1}\omega } d\omega \nonumber \\
& & + \int^\delta_{0} \! \frac{1}{\lambda -j\sqrt{-1}\omega }\left( g(\omega ) - h_+ + h_+ \right) d\omega 
 + \int^0_{-\delta} \! \frac{1}{\lambda -j\sqrt{-1}\omega }\left( g(\omega ) - h_- + h_- \right) d\omega \nonumber \\
&=& \int^\infty_{\delta} \! \frac{g(\omega )}{\lambda - j\sqrt{-1}\omega } d\omega 
+ \int^{\infty}_{\delta} \! \frac{g(-\omega )}{\lambda + j\sqrt{-1}\omega } d\omega \nonumber \\
& & + \int^\delta_{0} \! \frac{1}{\lambda -j\sqrt{-1}\omega }\left( g(\omega ) - h_+ \right) d\omega
 + \int^\delta_{0} \! \frac{1}{\lambda + j\sqrt{-1}\omega }\left( g(-\omega ) - h_- \right) d\omega \nonumber \\
& & + \int^\delta_{0} \! \frac{1}{\lambda -j\sqrt{-1}\omega }\left( h_+ - h_- \right) d\omega
 + h_- \int^\delta_{0} \!  \frac{d\omega}{\lambda -j\sqrt{-1}\omega }  
  + h_- \int^\delta_{0} \!  \frac{d\omega}{\lambda + j\sqrt{-1}\omega }.
\end{eqnarray}
Since $|\lambda | < \delta $, there exists a positive number $M$, which is independent of $\lambda $,
such that 
\begin{eqnarray*}
\int^\infty_{\delta} \! \frac{g(\pm \omega )}{\lambda \mp j\sqrt{-1}\omega } d\omega < M.
\end{eqnarray*}
Thus $|D(\lambda )|$ is estimated as
\begin{eqnarray*}
|D(\lambda )| & \geq & |h_+ - h_-| \int^\delta_{0} \! \frac{d\omega }{|\lambda -j\sqrt{-1}\omega |}
- \varepsilon  \int^\delta_{0} \! \frac{d\omega }{|\lambda -j\sqrt{-1}\omega |}
  - \varepsilon \int^\delta_{0} \! \frac{d\omega }{|\lambda + j\sqrt{-1}\omega |} \\ 
& & \quad - \left| \frac{h_-}{j} \log \left( \frac{\sqrt{-1} \lambda  + j\delta}{ \sqrt{-1}\lambda - j\delta }\right) \right| -2M.
\end{eqnarray*}
Since $y >0$, $|\lambda -j\sqrt{-1}\omega | < |\lambda + j\sqrt{-1}\omega |$.
This shows that 
\begin{equation}
|D(\lambda )| \geq \left( |h_+ - h_-| - 2\varepsilon \right)\int^\delta_{0} \! \frac{d\omega }{|\lambda -j\sqrt{-1}\omega |}
 - \left| \frac{h_-}{j} \log \left( \frac{\sqrt{-1} \lambda  + j\delta}{ \sqrt{-1}\lambda - j\delta }\right) \right| -2M.
\end{equation}
The right hand side tends to infinity as $\lambda \to 0$ if $2\varepsilon < |h_+ - h_-|$.
This proves that Eq.(\ref{4-10}) has no roots at $\lambda =0$ for positive $K$.

STEP 2: In general, since $g(\omega )$ is a non-negative measurable function, 
there exists a monotonic increasing sequence $\{ g_n (\omega )\}_{n=1}^\infty$ of non-negative simple functions
such that $g_n (\omega ) \to g (\omega )$ for each $\omega $.
In particular if $g(\omega )$ is discontinuous at $\omega  = 0$,
we can choose $\{ g_n (\omega )\}_{n=1}^\infty$ so that $g_n (\omega )$ is discontinuous at $\omega  = 0$
for any $n\in \mathbf{N}$.
Then, the proof is done in the same way as STEP 1 by approximating $g(\omega )$ by $g_n (\omega )$.

(ii) The formula Eq.(\ref{4-16}) is proved in Ahlfors~\cite{Ahl}. 
\end{proof} 

Let $(y, K)$ be one of the solutions of Eq.(\ref{4-15}).
Since $g(\omega )$ is continuous at $\omega  = y$, substituting it into the first equation
of Eq.(\ref{4-15}) yields
\begin{equation}
\pi g(y/j) = - \frac{\mathrm{Im}(f_j)}{K |f_j|^2}.
\end{equation}
Substituting $K = -\mathrm{Im }(f_j)/ (\pi |f_j|^2 g(y/j))$ obtained from the above
into the second equation of Eq.(\ref{4-15}) results in
\begin{equation}
\lim_{x\to 0}\int_{\mathbf{R}} \! 
\frac{j\omega -y}{x^2 + (j\omega -y)^2}g(\omega )d\omega 
  = \frac{\pi \mathrm{Re} (f_j)}{j \,\mathrm{Im}(f_j)} g(y/j).
\label{4-17}
\end{equation}
This equation for $y$ determines imaginary parts to which $\lambda (K)$ converges as
$\mathrm{Re} (\lambda (K)) \to +0$.
Let $y_1 , y_2, \cdots$ be roots of Eq.(\ref{4-17}).
Then, 
\begin{equation}
K_n = \frac{-\mathrm{Im}(f_j)}{\pi |f_j|^2 g(y_n/j)},\,\, n = 1,2,  \cdots 
\label{4-18}
\end{equation}
give the values such that $\mathrm{Re} (\lambda (K)) \to 0$ as $K \to K_n + 0$.
Now we obtain the next theorem.
\\[0.2cm]
\begin{thm}
 Suppose that $\mathrm{Im}(f_j) < 0$.
Let $y_1, y_2, \cdots$ be roots of Eq.(\ref{4-17}).
Put
\begin{equation}
K^{(j)}_c:= \inf_{n} K_n 
= \frac{-\mathrm{Im}(f_j)}{\pi |f_j|^2 \sup_{n} g(y_n/j)}.
\label{4-19}
\end{equation}
If $0 < K < K^{(j)}_c$, the operator $T_j$ has no eigenvalues, while if $K$ is slightly larger than
$K^{(j)}_c$, $T_j$ has eigenvalues on the right half plane.
\end{thm}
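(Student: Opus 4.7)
My approach is to treat the eigenvalues of $T_j$ --- i.e., the roots of $D(\lambda)=1/(j\sqrt{-1}Kf_j)$ in $\mathbb{C}\setminus\sqrt{-1}\cdot\mathrm{supp}(g)$ --- as continuous branches parameterized by $K$, tracked by the argument principle. Since $D$ is holomorphic off the imaginary axis and satisfies $D(\lambda)\sim 1/\lambda$ at infinity, a branch $\lambda(K)$ can appear or disappear only by (i) being absorbed into the continuous spectrum $\sqrt{-1}\cdot\mathrm{supp}(g)$, or (ii) escaping to infinity, and the latter can occur only as $K\to\infty$ because $D(\lambda(K))=1/(j\sqrt{-1}Kf_j)$ forces $|\lambda(K)|\to\infty$ to correspond to the right-hand side tending to $0$. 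Combined with Lemma 3.3(i), which places every eigenvalue in the open right half-plane, both claims reduce to counting zeros of this equation as $K$ crosses the critical values identified by Lemma 3.4.

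For the nonexistence claim I argue by contradiction. Suppose $\lambda(K_0)$ is an eigenvalue for some $K_0\in(0,K^{(j)}_c)$. Lemma 3.3(iii) supplies $K_1\in(0,K_0)$ admitting no eigenvalues, so by the dichotomy above the branch obtained from $\lambda(K_0)$ by decreasing $K$ must be absorbed into the imaginary axis at some $K^*\in[K_1,K_0]$, say $\lambda(K^*)=\sqrt{-1}y^*$. Taking $x\to 0^+$ in (\ref{4-14}) yields (\ref{4-15}) at $(y^*,K^*)$; Lemma 3.4(i) then forces $g$ to be continuous at $y^*/j$, whereupon Lemma 3.4(ii) evaluates the first equation of (\ref{4-15}) as $K^*=-\mathrm{Im}(f_j)/(\pi|f_j|^2 g(y^*/j))$, and the second reduces to (\ref{4-17}) at $y^*$. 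Hence $(y^*,K^*)$ coincides with some $(y_n,K_n)$, so $K^*\geq\inf_n K_n=K^{(j)}_c>K_0\geq K^*$, a contradiction.

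For the existence claim it suffices to exhibit an eigenvalue for $K$ slightly greater than $K_{n^*}$ for some $n^*$ with $K_{n^*}$ arbitrarily close to $K^{(j)}_c$ from above (one may take $K_{n^*}=K^{(j)}_c$ if the infimum is attained). Writing $D^+(\sqrt{-1}y):=\lim_{x\to 0^+}D(x+\sqrt{-1}y)$, the Sokhotski--Plemelj formula together with (\ref{4-17}) and (\ref{4-18}) yields $D^+(\sqrt{-1}y_{n^*})=1/(j\sqrt{-1}K_{n^*}f_j)$. Applying the implicit function theorem to the analytic continuation of $D$ across the imaginary axis near $\sqrt{-1}y_{n^*}$ produces a continuous branch $\lambda(K)=\sqrt{-1}y_{n^*}+c(K-K_{n^*})+O((K-K_{n^*})^2)$ of roots of $D(\lambda)=1/(j\sqrt{-1}Kf_j)$. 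By Lemma 3.3(i) the branch cannot enter the open left half-plane, while the nonexistence claim just proved forbids it from lying in the open right half-plane for $K<K^{(j)}_c$; these two one-sided constraints together force $\mathrm{Re}(c)\geq 0$ and, generically, $\mathrm{Re}(c)>0$, so $\lambda(K)$ moves into the open right half-plane as $K$ crosses upward through $K_{n^*}$, giving the required eigenvalue. The main obstacle is this last step: it requires $g$ to be regular enough near $y_{n^*}/j$ for $D$ to extend analytically across the imaginary axis with nonvanishing derivative at $\sqrt{-1}y_{n^*}$, and the degenerate case $\mathrm{Re}(c)=0$ must be handled either by a higher-order expansion or, more robustly, by a global Nyquist-type counting that tracks the total winding number of $D(\lambda)-1/(j\sqrt{-1}Kf_j)$ over a large right-half-plane contour as $K$ varies.
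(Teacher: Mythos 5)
Your proposal follows essentially the same route as the paper: eigenvalue branches are tracked via the argument principle, can only vanish by absorption into the continuous spectrum on the imaginary axis (the paper does not separately rule out escape to infinity, which you do correctly via $D(\lambda)\sim 1/\lambda$), and the absorption points are exactly the pairs $(y_n,K_n)$ characterized by Lemma 3.4 and Eqs.~(\ref{4-15})--(\ref{4-18}), giving nonexistence below $\inf_n K_n$ and existence just above. Your implicit-function-theorem argument for the existence half, together with the flagged degenerate case $\mathrm{Re}(c)=0$, makes explicit a step the paper asserts without detail (``one of the eigenvalues $\lambda_n(K)$ converges to $\sqrt{-1}y_n$ as $K\to K_n+0$''), so the gap you identify is present in the original as well and your write-up is, if anything, the more candid account.
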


Note that $\inf_n K_n$ is positive because of Lemma.3.3 (iii).
As a corollary, we obtain the transition point (bifurcation point to the partially locked state)
conjectured by Kuramoto~\cite{Kura2}:
\\[0.2cm]
\begin{crl}[Kuramoto's transition point]
Suppose that the probability density function $g(\omega )$ is even and $\max g(\omega ) = g(0)$.
If $\mathrm{Re} (f_1) = 0$ and $\mathrm{Im} (f_1) = -1/2$ (it corresponds to $f(\theta ) = \sin \theta $
in Eq.(\ref{KDMN})), then $K^{(1)}_c$ defined as above is given by
\begin{equation}
K^{(1)}_c = \frac{2}{\pi g(0)}.
\label{4-20}
\end{equation}
When $K > K^{(1)}_c$, the solution $Z_1 = 0$ of Eq.(\ref{4-1}) is unstable.
\end{crl}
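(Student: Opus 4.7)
The plan is to specialize Theorem 3.5 to the index $j=1$ with the Kuramoto coupling $f(\theta)=\sin\theta$. From the Fourier expansion recorded in Section 2, $f_1 = 1/(2\sqrt{-1})$, so $\mathrm{Re}(f_1)=0$, $\mathrm{Im}(f_1)=-1/2<0$, and $|f_1|^2=1/4$. The hypothesis $\mathrm{Im}(f_j)<0$ of Theorem 3.5 is thus satisfied, and substituting these numerical values directly into formula (\ref{4-19}) with $j=1$ gives
$$K^{(1)}_c \;=\; \frac{1/2}{\pi\cdot(1/4)\cdot \sup_n g(y_n)} \;=\; \frac{2}{\pi\, \sup_n g(y_n)}.$$
So the task reduces to identifying the roots $\{y_n\}$ of Eq.(\ref{4-17}) for these data and evaluating $\sup_n g(y_n)$.

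Next I would rewrite Eq.(\ref{4-17}) for $j=1$ and $\mathrm{Re}(f_1)=0$: the right-hand side vanishes and the equation becomes
$$\lim_{x\to +0}\int_{\mathbf{R}} \frac{\omega - y}{x^2 + (\omega - y)^2}\, g(\omega)\, d\omega \;=\; 0,$$
i.e.\ $y$ lies in the zero set of the Hilbert transform of $g$. I would then check that $y=0$ is a root. Indeed, for any fixed $x>0$ the integrand $\omega g(\omega)/(x^2+\omega^2)$ is odd in $\omega$ because $g$ is even, so the integral vanishes for every $x>0$, and the limit is $0$. Hence $y_1:=0$ is among the roots $\{y_n\}$, and correspondingly $K_1 = 2/(\pi g(0))$ appears in the list defining the infimum in (\ref{4-19}).

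Finally, since by hypothesis $\max_{\omega} g(\omega)=g(0)$, every root $y_n$ satisfies $g(y_n)\le g(0)$ with equality at $y_n=0$, so $\sup_n g(y_n)=g(0)$, which yields $K^{(1)}_c = 2/(\pi g(0))$. Instability of $Z_1=0$ for $K$ above $K^{(1)}_c$ is then the last sentence of Theorem 3.5 applied to $j=1$. The only real point deserving care is the odd-function cancellation at $y=0$, but this is not a principal-value issue: for each fixed $x>0$ the factor $\omega/(x^2+\omega^2)$ is bounded by $1/(2x)$, so the integrand is absolutely integrable against $g\in L^1(\mathbf{R})$ and the symmetry argument is rigorous. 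Thus I do not anticipate a genuine obstacle; the corollary is essentially a one-line consequence of Theorem 3.5 once the data are substituted and the obvious symmetric root of the Hilbert-transform equation is exhibited.
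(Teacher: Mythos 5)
Your proposal is correct and is precisely the computation the paper intends: the paper states this corollary without a separate proof, treating it as immediate from Theorem 3.5, and your substitution of $f_1 = 1/(2\sqrt{-1})$, the observation that $y=0$ solves Eq.(\ref{4-17}) by oddness of $\omega g(\omega)/(x^2+\omega^2)$ for even $g$, and the bound $\sup_n g(y_n) = g(0)$ from $\max g = g(0)$ supply exactly the missing details. The one caveat, which you inherit from the paper rather than introduce, is that Theorem 3.5 only asserts the existence of unstable eigenvalues for $K$ \emph{slightly} larger than $K^{(1)}_c$, so the final claim of instability for \emph{all} $K > K^{(1)}_c$ would strictly require an extra persistence argument for the eigenvalue that neither you nor the paper spells out.
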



\subsection{Semi-group generated by the operator $T_1$ ($\mathrm{Im} (f_1) <0$)}

Since we are interested in the dynamics of the order parameter $Z^0_1(t) = (Z_1, P_0)$,
in what follows, we consider only $j=1$ while cases $j=2,3,\cdots $ are investigated
in the same way.
Theorem 3.5 shows that $K_c^{(1)}$ is the least bifurcation point and the trivial solution 
$Z_1(t, \omega ) = 0$ of Eq.(\ref{4-1}) is unstable if $K$ is slightly larger than $K_c^{(1)}$.
If $0 < K <  K^{(1)}_c$, the spectrum of $T_1$ is on the imaginary axis: 
$\sigma (T_1) = \sigma (\sqrt{-1}\mathcal{M})$, and thus the dynamics of 
$Z_1$ is nontrivial.
In this subsection, we investigate the dynamics of $Z_1$ and 
the order parameter for $0 < K <  K^{(1)}_c$.
We will see that the order parameter 
may decay exponentially even if the spectrum lies on the imaginary axis
because of existence of resonance poles.

Since $\sqrt{-1}\mathcal{M}$ has the semi-group $e^{\sqrt{-1}\mathcal{M}t}$ and since $\mathcal{P}$
is bounded, the operator $T_1 = \sqrt{-1} \mathcal{M} + \sqrt{-1} K f_1 \mathcal{P}$ also generates
the semi-group (Kato~\cite{Kato}), say $e^{T_1t}$.
A solution of Eq.(\ref{4-1}) with an initial value $q(\omega ) \in L^2 (\mathbf{R}, g(\omega )d\omega)$
is given by $e^{T_1t}q(\omega )$.
The $e^{T_1t}$ is calculated by using the Laplace inversion formula
\begin{equation}
e^{T_1t} = \lim_{y\to \infty} \frac{1}{2\pi \sqrt{-1}} \int^{x + \sqrt{-1}y}_{x-\sqrt{-1}y} \!
e^{\lambda t} (\lambda -T_1)^{-1} d\lambda , 
\label{semi1}
\end{equation}
where $x > 0$ is chosen so that the contour is to the right of the spectrum of $T_1$ (Yosida~\cite{Yos}).
At first, let us calculate the resolvent $(\lambda -T_1)^{-1}$.
\\[0.2cm]
\begin{lmm}
For any $q(\omega ) \in L^2 (\mathbf{R}, g(\omega )d\omega)$, the equality
\begin{equation}
F_0(\lambda ) := (  (\lambda -T_1)^{-1} q, P_0) 
 = \frac{( (\lambda - \sqrt{-1} \mathcal{M})^{-1} q, P_0 ) }
{1 - \sqrt{-1}Kf_1 D(\lambda )}
\label{semi2}
\end{equation}
holds.
\end{lmm}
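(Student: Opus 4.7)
The plan is to treat this as a rank-one perturbation calculation. Write $T_1 = \sqrt{-1}\mathcal{M} + \sqrt{-1}Kf_1 \mathcal{P}$, where $\mathcal{P}$ is literally the rank-one operator $q \mapsto (q,P_0)P_0$. Since $\lambda$ is in the resolvent set of $T_1$ by hypothesis, set $x = (\lambda - T_1)^{-1}q \in L^2(\mathbf{R},g(\omega)d\omega)$, so that $(\lambda - T_1)x = q$. Expanding the definition of $T_1$ and moving the $\mathcal{M}$-piece to the left while isolating the $\mathcal{P}$-piece on the right gives
\begin{equation*}
(\lambda - \sqrt{-1}\mathcal{M})x = q + \sqrt{-1}Kf_1 (x,P_0) P_0.
\end{equation*}
For $\lambda \notin \sigma(\sqrt{-1}\mathcal{M})$ the resolvent $(\lambda - \sqrt{-1}\mathcal{M})^{-1}$ exists as a bounded operator, so I can invert to obtain
\begin{equation*}
x = (\lambda - \sqrt{-1}\mathcal{M})^{-1}q + \sqrt{-1}Kf_1 (x,P_0)\, (\lambda - \sqrt{-1}\mathcal{M})^{-1}P_0.
\end{equation*}

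The second step is to take the inner product of both sides with $P_0$, which collapses the rank-one structure. Using the definition $D(\lambda) = ((\lambda - \sqrt{-1}\mathcal{M})^{-1}P_0, P_0)$ from Eq.\,(\ref{4-8}) (with $j=1$), this yields a scalar equation
\begin{equation*}
(x,P_0) = ((\lambda - \sqrt{-1}\mathcal{M})^{-1}q, P_0) + \sqrt{-1}Kf_1\, D(\lambda)\, (x,P_0).
\end{equation*}
Solving the linear equation for the unknown scalar $(x,P_0)$ gives exactly the claimed identity (\ref{semi2}), provided the factor $1 - \sqrt{-1}Kf_1 D(\lambda)$ is nonzero.

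The only real issue is to justify that division. But by Proposition 3.2(i), applied with $j=1$, the zeros of $1 - \sqrt{-1}Kf_1 D(\lambda)$ are precisely the eigenvalues of $T_1$ lying outside $\sigma(\sqrt{-1}\mathcal{M})$. Since $F_0(\lambda)$ is being defined via the resolvent at a point $\lambda \in \varrho(T_1) \setminus \sigma(\sqrt{-1}\mathcal{M})$, such a point is automatically not an eigenvalue, so the denominator does not vanish and the formula makes sense. If the lemma is to be stated for $\lambda$ on a contour used later in the Laplace inversion (\ref{semi1}), we only need the contour to avoid the discrete set of eigenvalues together with $\sigma(\sqrt{-1}\mathcal{M})$, which is exactly the standard choice. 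No genuine obstacle arises — the computation is elementary once one recognizes $\mathcal{P}$ as a rank-one operator and uses the second resolvent-type manipulation.
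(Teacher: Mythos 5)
Your proposal is correct and follows essentially the same route as the paper: set $x=(\lambda-T_1)^{-1}q$, peel off the rank-one term $\sqrt{-1}Kf_1(x,P_0)P_0$, apply $(\lambda-\sqrt{-1}\mathcal{M})^{-1}$, pair with $P_0$, and solve the resulting scalar equation. The only addition is your explicit justification that the denominator $1-\sqrt{-1}Kf_1D(\lambda)$ is nonzero via Proposition 3.2(i), a point the paper leaves implicit.
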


\begin{proof} 
Put
\begin{eqnarray*}
R(\lambda )q := (\lambda -T_1)^{-1} q = (\lambda - \sqrt{-1}\mathcal{M}
 - \sqrt{-1}Kf_1 \mathcal{P})^{-1}q,
\end{eqnarray*}
which yields
\begin{eqnarray*}
(\lambda - \sqrt{-1}\mathcal{M})R(\lambda )q
 &=& q + \sqrt{-1}Kf_1\mathcal{P}R(\lambda )q \\
&=&  q + \sqrt{-1}Kf_1 (R(\lambda )q, P_0) P_0.
\end{eqnarray*}
This is rearranged as
\begin{eqnarray*}
R(\lambda )q =  (\lambda - \sqrt{-1}\mathcal{M})^{-1}q 
    + \sqrt{-1}Kf_1 (R(\lambda )q, P_0) (\lambda - \sqrt{-1}\mathcal{M})^{-1}P_0.
\end{eqnarray*}
By taking the inner product with $P_0$, we obtain
\begin{eqnarray*}
(R(\lambda )q, P_0) =  ((\lambda - \sqrt{-1}\mathcal{M})^{-1}q, P_0)
               + \sqrt{-1}Kf_1 (R(\lambda )q, P_0) D(\lambda ).
\end{eqnarray*}
This proves Eq.(\ref{semi2}).
\end{proof} 

Let $Z^0_1(t) = ( Z_1, P_0) $ be the order parameter with the initial condition $Z_1(0, \omega ) = q(\omega )$.
Eqs.(\ref{semi1}) and (\ref{semi2}) show that $Z^0_1(t)$ is given by
\begin{equation}
Z^0_1(t) =
(  e^{T_1t}q ,P_0) 
 = \lim_{y\to \infty} \frac{1}{2\pi \sqrt{-1}} \int^{x + \sqrt{-1}y}_{x-\sqrt{-1}y} \!
e^{\lambda t} \frac{( (\lambda - \sqrt{-1} \mathcal{M})^{-1} q , P_0) }
{1 - \sqrt{-1}Kf_1 D(\lambda )} d\lambda.
\label{semi3}
\end{equation}
One of the effective way to calculate the integral above is to use the residue theorem.
Recall that the resolvent $(\lambda - T_1)^{-1}$ is holomorphic on $\mathbf{C} \backslash \sigma (T_1)$.
Since we assume that $0 < K < K^{(1)}_c$, $T_1$ has no eigenvalues and the continuous 
spectrum lies on the imaginary axis : 
$\sigma (T_1) = \sigma (\sqrt{-1} \mathcal{M}) = \sqrt{-1} \cdot \mathrm{supp} (g)$.
Thus the integrand $e^{\lambda t} F_0(\lambda )$ in Eq.(\ref{semi3}) is holomorphic
on the right half plane and may not be holomorphic on $\sigma (T_1)$.
However, under assumptions below, we can show that $F_0(\lambda )$ has an analytic continuation $F_1(\lambda )$
through the line $\sigma (T_1)$ from right to left.
Then, $F_1(\lambda )$ may have poles on the left half plane (the second Riemann sheet
of the resolvent), which are called \textit{resonance poles}~\cite{Reed}.
The resonance pole $\mu$ affects the integral in Eq.(\ref{semi3}) through the residue theorem
(see Fig.\ref{fig6}). In this manner, the order parameter $Z^0_1(t)$ can decay with the exponential rate $\mathrm{Re} (\mu)$.
Such an exponential decay caused by resonance poles is well known in the theory of Schr\"{o}dinger operators \cite{Reed},
and for the Kuramoto model, it is investigated numerically by Strogatz \textit{et al.}~\cite{Str2}
and Balmforth \textit{et al.}~\cite{Bal}.

At first, we construct an analytic continuation of the function $F_0(\lambda )$.
\\[0.2cm]
\begin{lmm}
Suppose that the probability density function $g(\omega )$
and an initial condition $q(\omega )$ are real analytic on $\mathbf{R}$.
If $g(\omega )$ and $q(\omega)$ have meromorphic continuations $g^*(\lambda )$ and $q^*(\lambda )$
to the upper half plane, respectively,
then the function $F_0(\lambda )$ defined on the right half plane has the meromorphic continuation
$F_1(\lambda )$ to the left half plane, which is given by
\begin{equation}
F_1(\lambda ) = \frac{( (\lambda - \sqrt{-1} \mathcal{M})^{-1} q,  P_0) 
 + 2\pi q^* (-\sqrt{-1}\lambda ) g^*(-\sqrt{-1}\lambda )}
{1 - \sqrt{-1}Kf_1 D(\lambda ) - 2\pi \sqrt{-1}Kf_1 g^*(-\sqrt{-1}\lambda )}.
\label{semi4}
\end{equation}
\end{lmm}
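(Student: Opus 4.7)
The plan is to obtain the meromorphic continuation of $F_0$ by analytically continuing the numerator and denominator separately.  Both are contour integrals of the form $\int_{\mathbf{R}} h(\omega )/(\lambda -\sqrt{-1}\omega )\,d\omega $, whose only moving singularity in $\omega $ is the simple pole at $\omega = -\sqrt{-1}\lambda $.  The idea is to deform the contour $\mathbf{R}$ into a curve $C$ in the upper half-plane so that this pole stays off $C$ as $\lambda $ is continued across the imaginary axis from right to left.

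For $\mathrm{Re}(\lambda ) > 0$, the pole $\omega = -\sqrt{-1}\lambda $ has imaginary part $-\mathrm{Re}(\lambda )<0$ and so lies strictly below $\mathbf{R}$.  Because $g$ and $q$ extend to meromorphic functions $g^*, q^*$ on the upper half-plane, the integrands of $D(\lambda )$ and $N(\lambda ):=((\lambda -\sqrt{-1}\mathcal{M})^{-1}q, P_0)$ extend meromorphically to a neighborhood of $\mathbf{R}$.  I push the contour up to a curve $C$ lying slightly above the real axis and avoiding the (locally finitely many) poles of $g^*, q^*$; by Cauchy's theorem $\int_C = \int_{\mathbf{R}}$ on the right half-plane.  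As $\lambda $ is driven into the left half-plane, the pole $\omega = -\sqrt{-1}\lambda $ rises into the upper half-plane, and I keep $C$ above this moving pole.  The resulting integrals $N_1(\lambda )$ and $D_1(\lambda )$, defined by putting the same integrands over $C$, are then holomorphic in $\lambda $ throughout a neighborhood of the imaginary axis and furnish the desired meromorphic continuations of $N$ and $D$.

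To extract the explicit formula, for $\mathrm{Re}(\lambda ) < 0$ I apply the residue theorem to the closed curve consisting of $\mathbf{R}$ traversed left-to-right and $C$ traversed right-to-left; this encloses the pole at $\omega = -\sqrt{-1}\lambda $ once counterclockwise.  Since $\mathrm{Res}_{\omega = -\sqrt{-1}\lambda }h(\omega )/(\lambda -\sqrt{-1}\omega ) = \sqrt{-1}\,h^*(-\sqrt{-1}\lambda )$, one obtains
\begin{eqnarray*}
\int_C \frac{h(\omega )}{\lambda -\sqrt{-1}\omega }\,d\omega
 = \int_{\mathbf{R}} \frac{h(\omega )}{\lambda -\sqrt{-1}\omega }\,d\omega + 2\pi\,h^*(-\sqrt{-1}\lambda )
\end{eqnarray*}
for any $h$ with meromorphic continuation $h^*$ into the upper half-plane.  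Taking $h=qg$ yields $N_1 = N + 2\pi q^*(-\sqrt{-1}\lambda )g^*(-\sqrt{-1}\lambda )$, and $h=g$ yields $D_1 = D + 2\pi g^*(-\sqrt{-1}\lambda )$; substituting these into the quotient $N_1/(1-\sqrt{-1}Kf_1 D_1)$ produces precisely the formula (\ref{semi4}) for $F_1$.

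The main obstacle is justifying the deformation carefully: the contour $C$ must sidestep both the moving pole at $\omega = -\sqrt{-1}\lambda $ and any poles of $g^*, q^*$ lying in the strip into which $C$ is pushed, and the tail integrals along $C$ must converge.  Both points are handled locally — the deformation is confined to a compact window around the relevant portion of the imaginary axis, and integrability at infinity on $C$ is inherited from $qg\in L^1(\mathbf{R})$ together with a mild growth control on $g^*, q^*$ implicit in their being meromorphic (rather than essentially singular) near the real axis.  Once $F_1$ is defined on a strip adjacent to the imaginary axis, its meromorphic extension to the full left half-plane follows by iterating the deformation, picking up additional residues at poles of $g^*, q^*$ as they are swept through.
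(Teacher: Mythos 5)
Your argument is correct and reaches formula (\ref{semi4}) with the right signs, but it runs along a different track from the paper's. The paper does not deform contours: it invokes the Poisson-kernel limit (\ref{4-16}) to compute the jump
$\lim_{\mathrm{Re}\lambda\to+0}N(\lambda)-\lim_{\mathrm{Re}\lambda\to-0}N(\lambda)=2\pi q(\mathrm{Im}\lambda)g(\mathrm{Im}\lambda)$
of the Cauchy-type integral across the imaginary axis, and then simply declares that adding $2\pi q^*(-\sqrt{-1}\lambda)g^*(-\sqrt{-1}\lambda)$ to the left-hand branch produces the continuation; the same is done for $D(\lambda)$ in the denominator. That route is shorter but leaves implicit the step from ``boundary values from the two sides agree'' to ``the glued function is holomorphic across the axis.'' Your contour-deformation argument supplies exactly that missing mechanism: $\int_C$ is manifestly holomorphic in $\lambda$ on a neighborhood of the axis point, equals $N(\lambda)$ for $\mathrm{Re}(\lambda)>0$ by Cauchy's theorem, and equals $N(\lambda)+2\pi q^*g^*$ for $\mathrm{Re}(\lambda)<0$ by the residue computation (your residue $\sqrt{-1}\,h^*(-\sqrt{-1}\lambda)$ and the orientation bookkeeping are both right). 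So your version is the more self-contained one; the paper's buys brevity by outsourcing the analytic content to the Plemelj-type formula.

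One caution about your closing sentence. To pass from the strip to the whole left half plane you should \emph{not} keep pushing $C$ upward past poles of $q^*g^*$: doing so changes $\int_C$ by residue terms of the form $-2\pi\sqrt{-1}\,\mathrm{Res}_{\omega_1}(q^*g^*)/(\lambda-\sqrt{-1}\omega_1)$, and the resulting integral is no longer the analytic continuation of $N$ from the right half plane. The correct (and simpler) conclusion is that once the identity $N_1=N+2\pi q^*(-\sqrt{-1}\lambda)g^*(-\sqrt{-1}\lambda)$ is established on a strip to the left of the axis, its right-hand side is already a meromorphic function on the entire left half plane (since $N$ is holomorphic off $\sqrt{-1}\,\mathrm{supp}(g)$ and $-\sqrt{-1}\lambda$ maps the left half plane into the upper half plane where $q^*g^*$ is meromorphic), so by uniqueness of analytic continuation it \emph{is} $F_1$ there; no further deformation is needed. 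With that replacement your proof is complete.
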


\begin{proof} 
By the formula (\ref{4-16}), we obtain
\begin{equation}
\lim_{\mathrm{Re}(\lambda ) \to +0} ((\lambda - \sqrt{-1} \mathcal{M})^{-1} q ,  P_0) 
  - \lim_{\mathrm{Re}(\lambda ) \to -0} ((\lambda - \sqrt{-1} \mathcal{M})^{-1} q ,  P_0) 
 = 2\pi q (\mathrm{Im} (\lambda )) \cdot g(\mathrm{Im} (\lambda )).
\end{equation}
Thus the meromorphic continuation of $( (\lambda - \sqrt{-1} \mathcal{M})^{-1} q , P_0)$
from right to left is given by
\begin{equation}
\left\{ \begin{array}{ll}
((\lambda - \sqrt{-1} \mathcal{M})^{-1} q ,  P_0)  & (\mathrm{Re} (\lambda ) > 0),  \\
( (\lambda - \sqrt{-1} \mathcal{M})^{-1} q , P_0)  + 2\pi q^* (-\sqrt{-1}\lambda ) g^* (-\sqrt{-1}\lambda )
   & (\mathrm{Re} (\lambda ) < 0). \\
\end{array} \right.
\end{equation}
This proves Eq.(\ref{semi4}). 
\end{proof} 

Poles of $F_1(\lambda )$ (resonance poles) on the left half plane are given as roots of the equation
\begin{equation}
D(\lambda ) + 2\pi g^* (-\sqrt{-1} \lambda ) = \frac{1}{\sqrt{-1}K f_1}, \quad \mathrm{Re}(\lambda ) < 0
\label{semi7}
\end{equation}
and poles of the function $q^* (-\sqrt{-1}\lambda )$.
In the next theorem, we suppose for simplicity that $q^* (-\sqrt{-1}\lambda )$ has no poles.
Now we calculate the order parameter $Z^0_1(t)$.
\\[0.2cm]
\begin{thm}
For Eq.(\ref{4-1}) with $j=1$, suppose that
\\
(i)\, $\mathrm{Im} (f_1) < 0$ and $0 < K < K^{(1)}_c$.
\\
(ii) \, the probability density function $g(\omega )$ is real analytic
on $\mathbf{R}$ and has a meromorphic continuation $g^* (\lambda )$ to the upper half plane.
\\
(iii) \, an initial condition $q(\omega )$ is real analytic
on $\mathbf{R}$ and has an analytic continuation $q^*(\lambda )$
to the upper half plane.
\\
(iv)\, there exists a positive number $\delta $ such that $|F_1(\lambda )| \to 0$ as $|\lambda | \to \infty$
in the angular domains
\begin{equation}
|\mathrm{arg} (\lambda ) | \leq \delta ,\,\,\,
| \mathrm{arg} (\lambda ) -\pi | \leq \delta.
\label{angular}
\end{equation}
(v)\, there exist positive constants $D$ and $\beta$ such that
\begin{equation}
|F_1(\lambda )| \leq D e^{\beta |\lambda |}
\end{equation}
in the angular domain $\pi/2 + \delta \leq \mathrm{arg} (\lambda ) \leq 3\pi/2 - \delta $.
\\[0.2cm]
Then, there exist resonance poles of $T_1$ on the left half plane.
Let $\alpha _1, \alpha _2, \cdots $ be resonance poles 
such that $|\alpha _1| \leq |\alpha _2| \leq \cdots $.
Then, there exists a positive constant $t_0$ such that the order parameter is given by
\begin{equation}
Z^0_1(t) = (e^{T_1t}q,  P_0) = \sum^\infty_{n=1} p_n(t)e^{\alpha _n t}, \quad t>t_0
\label{semi8}
\end{equation}
where $p_n(t)$ is a polynomial in $t$.
In particular, $Z^0_1(t)$ decays exponentially as $t\to \infty$.
\end{thm}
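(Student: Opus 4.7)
The plan is to evaluate the Bromwich integral (\ref{semi3}) by deforming the vertical contour $\mathrm{Re}(\lambda)=x$ into the left half plane using the meromorphic continuation $F_1$ from Lemma 3.7, and to collect residues at the resonance poles encountered there. Each such pole $\alpha_n$ of order $m_n$ will contribute a residue of the form $p_n(t)e^{\alpha_n t}$ with $\deg p_n = m_n - 1$, which gives (\ref{semi8}).

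Since $F_0=F_1$ on the right half plane, we may replace $F_0$ by $F_1$ in (\ref{semi3}). For each $R>0$ I close the Bromwich segment $[x-\sqrt{-1}R,\,x+\sqrt{-1}R]$ by a curve $\Omega_R$ lying in the left half plane, chosen so that, for large $|\lambda|$, $\lambda\in\Omega_R$ lies in one of the angular domains covered by assumptions (iv) or (v); a concrete choice is to follow the rays $\arg\lambda = \pi\pm(\pi/2-\delta)$ inward from large radius, closed across the negative real axis. Inside the resulting closed contour $\Gamma_R$, by assumption (iii) the numerator of (\ref{semi4}) is entire and the denominator vanishes only at the roots of (\ref{semi7}); hence $F_1$ is meromorphic with singularities only at the resonance poles $\alpha_n$. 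The residue theorem gives
\[
\frac{1}{2\pi\sqrt{-1}}\oint_{\Gamma_R}e^{\lambda t}F_1(\lambda)\,d\lambda
\;=\;\sum_{|\alpha_n|<R}\mathrm{Res}_{\lambda=\alpha_n}\bigl(e^{\lambda t}F_1(\lambda)\bigr).
\]

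The crucial analytic step is to show that $\int_{\Omega_R}e^{\lambda t}F_1(\lambda)\,d\lambda\to 0$ as $R\to\infty$ once $t$ is large enough. On the part of $\Omega_R$ in the sector $\pi/2+\delta\leq\arg\lambda\leq3\pi/2-\delta$ we have $\mathrm{Re}(\lambda)\leq-|\lambda|\sin\delta$, so $|e^{\lambda t}|\leq e^{-|\lambda|t\sin\delta}$, and combining with (v) yields
\[
\bigl|e^{\lambda t}F_1(\lambda)\bigr|\;\leq\;D\,e^{-|\lambda|(t\sin\delta-\beta)},
\]
which is uniformly small on $\Omega_R$ whenever $t>t_0:=\beta/\sin\delta$. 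On the parts of $\Omega_R$ lying in the narrow sectors $|\arg\lambda|\leq\delta$ and $|\arg\lambda-\pi|\leq\delta$, hypothesis (iv) provides $|F_1(\lambda)|\to 0$ and a Jordan-type estimate kills the integral for every $t>0$. Passing to the limit $R\to\infty$ therefore collapses the left-hand side of the contour identity to $Z^0_1(t)$ and produces the desired series (\ref{semi8}).

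Existence of at least one resonance pole is inherited from the bifurcation in Theorem 3.5: for $K$ just above $K_c^{(1)}$ the equation $1=\sqrt{-1}Kf_1 D(\lambda)$ has a zero with $\mathrm{Re}(\lambda)>0$, and as $K$ decreases through $K_c^{(1)}$ this zero crosses the imaginary axis; by the Sokhotski--Plemelj jump $\lim_{\mathrm{Re}\lambda\to-0}D(\lambda)-\lim_{\mathrm{Re}\lambda\to+0}D(\lambda)=2\pi g^*(-\sqrt{-1}\lambda)$, which is precisely the difference between the eigenvalue equation (\ref{4-10}) and the resonance-pole equation (\ref{semi7}), the zero persists on the second Riemann sheet as a resonance pole with $\mathrm{Re}(\alpha_n)<0$ for $K$ slightly below $K_c^{(1)}$. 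The main obstacle in executing this plan is rigorously designing the closing curve $\Omega_R$ and controlling the transition regions near $\arg\lambda=\pm\pi/2$ where neither (iv) nor (v) directly applies; a secondary subtlety is the convergence of the possibly infinite series in (\ref{semi8}), which requires $\mathrm{Re}(\alpha_n)\to-\infty$ rapidly enough to dominate the polynomial factors $p_n(t)$ and is again controlled via assumption (v).
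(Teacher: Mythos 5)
Your overall strategy coincides with the paper's: rewrite the Laplace inversion integral (\ref{semi3}) using the continuation $F_1$ of Lemma 3.8, deform the Bromwich contour into the left half plane, collect residues at the resonance poles, and obtain existence of at least one pole from the eigenvalue that reaches the imaginary axis at $K=K^{(1)}_c$. However, your concrete closing curve does not work. A curve that follows the rays $\mathrm{arg}\,\lambda=\pi/2+\delta$ and $\mathrm{arg}\,\lambda=3\pi/2-\delta$ inward and crosses the negative real axis encloses only the region between the Bromwich line and those rays, so the residue sum misses every resonance pole with $\pi/2+\delta<\mathrm{arg}\,\alpha_n<3\pi/2-\delta$; such poles genuinely occur (for Gaussian $g$ they accumulate at infinity, cf.\ Example 3.11). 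Moreover, on a full ray your pointwise bound $|e^{\lambda t}F_1(\lambda)|\le De^{-|\lambda|(t\sin\delta-\beta)}$ only gives $\int_0^R De^{-s(t\sin\delta-\beta)}ds\le D/(t\sin\delta-\beta)$, which is bounded but does \emph{not} tend to zero as $R\to\infty$: pointwise smallness of the integrand at large $|\lambda|$ is not enough to discard a contour of unbounded length that passes through the bounded region. The paper instead closes with a single circular arc $C_4=\{re^{\sqrt{-1}\theta}:\pi/2+\delta\le\theta\le 3\pi/2-\delta\}$ and sends $r\to\infty$; there Jordan's inequality gives $\int_{C_4}|e^{\lambda t}F_1|\,|d\lambda|\le De^{\beta r}\cdot\frac{\pi}{t}\bigl(e^{-2rt\delta/\pi}-e^{-rt}\bigr)\to 0$ precisely when $t>t_0=\max\{\beta,\pi\beta/(2\delta)\}$. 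This computation is the source of the constant $t_0$ in the statement, and it is absent from your argument; your sector estimate is correct but must be applied to an arc of radius $r$ (length $O(r)$, integrand $O(e^{-r(t\sin\delta-\beta)})$), not to rays.

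Two further points. For the existence of resonance poles, asserting that the eigenvalue ``crosses'' the axis as $K$ decreases through $K^{(1)}_c$ leaves open the possibility that $\lambda(K)$ reaches the axis and then slides along it; the paper excludes this by differentiating (\ref{4-10}) in $K$ to get $\lambda'(K)\neq 0$ and by observing that the admissible imaginary parts, i.e.\ the roots $y$ of (\ref{4-17}), are isolated because both sides of (\ref{4-17}) are analytic in $y$ --- only then does the root pass to the second sheet as a solution of (\ref{semi7}) with $\mathrm{Re}\,\lambda<0$. Finally, the ``transition regions'' you flag near $\mathrm{arg}\,\lambda=\pm\pi/2$ are exactly what the paper's pieces $C_2,C_3,C_5,C_6$ and assumption (iv) are for: on those arcs $\mathrm{Re}\,\lambda\le d$, so $|e^{\lambda t}|$ is bounded and $|F_1(\lambda)|\to 0$ together with a Jordan-type estimate suffices for every $t>0$; this is not an obstacle but a step you must carry out. (The convergence of the infinite series in (\ref{semi8}) is likewise not addressed by the paper, so that concern does not distinguish your argument from it.)
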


\begin{proof}
At first, we prove the existence of resonance poles.
Resonance poles are roots of Eq.(\ref{semi7}), which is the analytic continuation 
of the equation (\ref{4-10}) for $j=1$. Thus one of the resonance poles is obtained 
as a continuation of an eigenvalue $\lambda (K)$.
Recall that $\lambda (K)$ converges into the imaginary axis as
$K\to K^{(1)}_c + 0$. To prove that there exists a resonance pole on the left half plane
when $K < K_c^{(1)}$, we have to show that $\lambda (K)$ does not stay on the imaginary axis for $K < K^{(1)}_c$.
Differentiating Eq.(\ref{4-10}) with respect to $K$, we obtain
\begin{equation}
\lambda '(K) \int_{\mathbf{R}} \! \frac{1}{(\lambda - \sqrt{-1}\omega )^2}g(\omega )d\omega  = \frac{1}{\sqrt{-1}K^2f_1}, 
\end{equation}
which proves that $\lambda '(K) \neq 0$.
Further, roots $y$ of Eq.(\ref{4-17}), which determines eigenvalues on the imaginary axis,
are isolated because both side of Eq.(\ref{4-17}) are analytic
with respect to $y$.
This means that $\lambda (K)$ can not move along the imaginary axis.
This proves that an eigenvalue $\lambda (K)$ gets across the imaginary axis 
from right to left as $K$ decreases from $ K^{(1)}_c$, which gives a root of Eq.(\ref{semi7}).
Note that there may exist resonance poles which are not continuations of eigenvalues (see Example 3.11).

Next, let us prove Eq.(\ref{semi8}).
Let $d >0$ be a small number and $r$ sufficiently large number.
Take paths $C_1$ to $C_6$ as are shown in Fig.\ref{fig6}:
\begin{eqnarray*}
& & C_1 = \{ d + \sqrt{-1} y \, | \, -r \leq y \leq r\}, \\
& & C_2 = \{ x + \sqrt{-1}r \, | \, 0 \leq x \leq d\}, \\
& & C_3 = \{ r e^{\sqrt{-1}\theta } \, | \, \pi /2 \leq \theta \leq \pi / 2 + \delta  \}, \\
& & C_4 = \{ r e^{\sqrt{-1}\theta } \, | \, \pi / 2 + \delta \leq \theta \leq 3\pi / 2 - \delta  \},
\end{eqnarray*}
and $C_5$ and $C_6$ are defined in a similar way to $C_3$ and $C_2$, respectively.
We put $C = \sum^6_{j=1}C_j$.

\begin{figure}
\begin{center}
\includegraphics[]{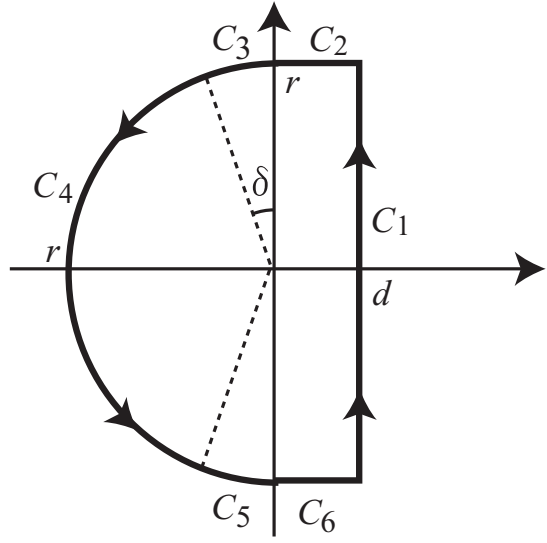}
\caption[]{The contour for the Laplace inversion formula.}
\label{fig6}
\end{center}
\end{figure}

Let $\alpha _1 , \alpha _2 ,\cdots ,\alpha _{n(C)}$ be resonance poles 
inside the closed curve $C$, where we assume that there are no resonance poles
on the curve $C$ by deforming it slightly if necessary.
Let $R_1(t) , R_2(t)$, $\cdots  ,R_{n(c)}(t)$ be corresponding residues of $e^{\lambda t} F_1(\lambda )$,
respectively. Note that if $\alpha _j$ is a pole of $F_1(\lambda )$ of order $m_j$,
$R_j(t)$ is of the form $R_j(t) = p_j(t) e^{\alpha _j t}$ with a polynomial $p_j(t)$ of degree $m_j-1$.
By the residue theorem, we have
\begin{eqnarray*}
2\pi \sqrt{-1} \sum^{n(C)}_{j=1} R_j(t)
 = \int_{C_6 + C_1 + C_2} \! e^{\lambda t} F_0(\lambda ) d\lambda 
  + \int_{C_3 + C_4 + C_5} \! e^{\lambda t} F_1(\lambda ) d\lambda.
\end{eqnarray*}
The integral $\int_{C_1} \! e^{\lambda t} F_0(\lambda ) d\lambda / (2\pi \sqrt{-1})$ converges to $Z^0_1(t)$
as $r \to \infty$.
It is easy to show that the integrals along $C_2, C_3, C_5, C_6$ tend to zero as $r\to \infty$
because of the assumption (iv).
We have to estimate the integral along $C_4$ as
\begin{eqnarray}
\left| \int_{C_4} \! e^{\lambda t} F_1(\lambda ) d\lambda \right| 
&\leq & \int^{3\pi /2 - \delta }_{\pi /2 + \delta }
  \! r e^{r t \cos \theta } \, |F_1 (r e^{\sqrt{-1} \theta })| d\theta  \nonumber \\
&\leq & \max_{\pi / 2 + \delta \leq \theta  \leq 3\pi /2 - \delta } |F_1 (r e^{\sqrt{-1}\theta })|
\int^{3\pi /2 - \delta }_{\pi /2 + \delta }
  \! r e^{r t \cos \theta } d\theta  \nonumber \\
&\leq & D e^{\beta r}
\int^{\pi /2 }_{ \delta } \! 2r e^{-r t \sin \phi} d\phi  \nonumber \\
&\leq & D e^{\beta r}
\int^{\pi /2 }_{ \delta } \! 2r e^{-2r t \phi /\pi} d\phi  \nonumber \\
& \leq &  D e^{\beta r}
   \cdot \frac{\pi }{t} \left( e^{-2r t \delta / \pi} - e^{-r t}\right) .
\label{semi9}
\end{eqnarray}
Thus if $t > t_0 := \max \,\{ \beta, \pi \beta/(2\delta )\}$, 
this integral tends to zero as $r\to \infty$.  
\end{proof}

\begin{exa}
If $g(\omega )$ is a rational function,
the assumptions are satisfied when $q^*(\lambda )$ is bounded on the upper half plane.
In this case, the number of resonance poles is finite and thus
Eq.(\ref{semi8}) becomes finite sum.
For example if $g(\omega ) = 1/(\pi (1 + \omega ^2))$ is the Lorentzian distribution,
a resonance pole is given by $\lambda  = \sqrt{-1}Kf_1 -1$ (a root of Eq.(\ref{semi7})).
Therefore $Z^0_1(t)$ decays with the exponential rates $\mathrm{Re} (\sqrt{-1}Kf_1 -1)$.
\end{exa}

\begin{exa}
If $g(\omega )$ is the Gaussian distribution,
the assumptions are satisfied when $q^*(\lambda )$ is of exponential type; that is, 
there exist positive constants $C$ and $\beta$ such that $|q^*(\lambda )| \leq Ce^{\beta |\lambda |}$.
Since the analytic continuation $g^*(\lambda )$ has an essential singularity at infinity, 
there exist infinitely many resonance poles and they accumulate at infinity.
\end{exa}


\subsection{Semi-group generated by the operator $T_1$ ($\mathrm{Im} (f_1) \geq 0$)}

In Sec.3.1 and Sec.3.4, we investigate the semi-group generated by the operator 
$T_1 = \sqrt{-1} \mathcal{M} + \sqrt{-1}Kf_1 \mathcal{P}$ for the cases $f_1 = 0$ and $\mathrm{Im} (f_1) < 0$,
respectively.
In this subsection, we consider the case $\mathrm{Im} (f_1) \geq 0$.
\\[0.2cm]
\textbf{Theorem 3.12.}\, Suppose that the assumptions (ii) to (v) of Thm.3.9 hold.
If $\mathrm{Im} (f_1) \geq 0$, for an arbitrarily fixed $K>0$,
the order parameter $Z^0_1(t) = (e^{T_1t}q, P_0)$ decays exponentially as $t\to \infty$.
\\[0.2cm]
We show an idea of the proof.
If $\mathrm{Im} (f_1) = 0$, $T_1/\sqrt{-1} = \mathcal{M} + K \mathrm{Re}(f_1) \mathcal{P}$
is self-adjoint and a rank one perturbation of the multiplication $\mathcal{M}$.
By Theorem X-4.3 in \cite{Kato}, $T_1/\sqrt{-1}$ and $\mathcal{M}$ are unitarily equivalent.
Since $(e^{\sqrt{-1}\mathcal{M}t}q, P_0)$ decays exponentially (see Sec.3.1),
we can prove that so is $(e^{T_1t}q, P_0)$.

If $\mathrm{Im} (f_1) > 0$, change the parameter as $K \mapsto -K$.
Then, the problem is reduced to the case $K<0$ and $\mathrm{Im} (f_1) < 0$, and Thm.3.12
is proved in a similar manner to the proof of Thm.3.9.


\end{document}